\documentclass{amsart}
\usepackage{hyperref}
\usepackage{fancyhdr}
\usepackage{graphicx}
\usepackage{caption}
\hypersetup{
	colorlinks=true,
	anchorcolor=blue,
	linkcolor=blue,
	filecolor=blue,
	urlcolor=blue,
	citecolor=blue,
	bookmarks=true,
	bookmarksopen=true,
	pdfborder=000
}
\usepackage{amsmath}
\setlength{\abovedisplayskip}{4pt plus 1pt minus 1pt}
\setlength{\belowdisplayskip}{4pt plus 1pt minus 1pt}
\usepackage{amsfonts}
\usepackage{amsthm}
\usepackage{amssymb}
\usepackage{enumerate}

\theoremstyle{plain}
\newtheorem{thm}{Theorem}[section]

\newtheorem{cor}[thm]{Corollary}

\newtheorem{lemma}[thm]{Lemma}
\theoremstyle{definition}
\newtheorem{defn}[thm]{Definition}
\theoremstyle{remark}
\newtheorem{rmk}[thm]{Remark}

\makeatletter

\newcommand{\Rmnum}[1]{\expandafter\@slowromancap\romannumeral #1@}
\numberwithin{equation}{section}

\makeatother
\pagestyle{plain}
\captionsetup[figure]{name={Fig.}}

\begin{document}
	\title{The growth rate on the volume of $\mathcal{M}_g^{<L(g)}$}
	
	\author{Jinsong Liu}
	\address[J. ~Liu, X. ~Shan]{State Key Laboratory of Mathematical Sciences, Academy of Mathematics and Systems Science, Chinese Academy of Sciences, Beijing 100190, China}
	\address{School of Mathematical Sciences, University of Chinese Academy of Sciences, Beijing 100049, China}
	\email{liujsong@math.ac.cn, shanxu2021@amss.ac.cn}
	
	\author{Xu Shan}
	
	\author{Lang Wang}
	\address[L. ~Wang]{School of Mathematical Sciences, Guizhou Normal University, Guiyang 550025, China}
	\email{wanglang2020@amss.ac.cn}
	
	\author{Yaosong Yang}
	\address[Y. ~Yang]{Beijing International Center for Mathematical Research (BICMR), Beijing 100871, China}
	\email{yangyaosong@amss.ac.cn}
	
	\maketitle
	\begin{abstract}
		Let $\mathcal{M}_g$ be the moduli space of hyperbolic surfaces of genus g endowed with the Weil-Petersson metric. In this paper, we introduce a function $L(g)$ of genus $g$ and call the geodesics whose length less than $L(g)$ short geodesics. We compute the growth rate on the volume of the subset of hyperbolic surfaces with short geodesics. In particular, when $g$ approaches infinity, if $L(g)$ also approaches infinity, then the volume of surfaces characterized by short geodesics is equal to $V_g$ almost surely.
	\end{abstract}

	\section{\noindent{\bf Introduction}}

	Let $\mathbf{L}=(L_1,\dots,L_n)$ be a $n$-type in $\mathbb{R}^n_+$. Denote $\mathcal{T}_{g,n}(\mathbf{L})$ to be the Teich\"uller space consists of hyperbolic structures with genus $g$ and $n$ marked geodesic boundary components of lengths $L_1,\dots,L_n$. If $L_i=0$ for some $i$, it indicates that the $i$-th boundary component is a cusp. If all boundary components are cusps, we will simply denote $\mathcal{T}_{g,n}(\mathbf{L})$ as $\mathcal{T}_{g,n}$. The moduli space $\mathcal{M}_{g,n}(\mathbf{L})$ is the quotient of $\mathcal{T}_{g,n}(\mathbf{L})$ by the action of the mapping class group. We will simply denote $\mathcal{M}_{g,0}$ as $\mathcal{M}_g$.
	
	The length of the shortest geodesic on hyperbolic surfaces is a fundamental quantity that has been extensively studied in recent years. In 1985, Bers \cite{bers1985inequality} proved the existence of a universal constant $l(g)$ such that every closed hyperbolic surface of genus $g$ admits a closed geodesic of length at most $l(g)$. Subsequent work by Buser \cite{buser2010geometry} established the refined estimates for the lengths and counts of short geodesics, exploring their profound implications for the geometry and spectral properties of Riemann surfaces. The primary technique in their work is the area-length method, employed to estimate the length of the shortest geodesics.
	
	On the other hand, the Weil-Petersson metric on Teichum\"uller space has been extensively investigated in recent years, and is related to the mentioned topics above. A pivotal advancement came from Mirzakhani's work \cite{mirzakhani2007simple}, which links Weil-Petersson volumes of moduli spaces to the enumeration of simple closed geodesics, providing powerful new tools for analyzing geodesic distributions. Subsequently, Mirzakhani used the integral formula to estimate the volume of the moduli space of the hyperbolic surfaces with short enough geodesics, and the approximate length of the shortest separating geodesics in \cite{mirzakhani2013growth}. Furthermore, Mirzakhani and Petri \cite{mirzakhani2019lengths} computed the large genus asymptotic behavior of the expected systole length on random surfaces, while Nie, Wu and Xue \cite{nie2023large} investigated the length spectrum of separating closed geodesics.

	Let $\mathcal{M}_g^{<t}$ denote the subset of $\mathcal{M}_g$ consisting of all surfaces $X$ that admit closed geodesics of length less than $t$ and $\mathcal{M}_g^{\geqslant t}$ denote its complement. For a fixed small constant $\varepsilon >0$, Mirakhani \cite{mirzakhani2013growth} demonstrated that the volume of $\mathcal{M}_g^{<\varepsilon}$ satisfies $\lim\limits_{g\rightarrow\infty}\frac{\operatorname{Vol}(\mathcal{M}_g^{<\varepsilon})}{V_g}=O(\varepsilon^2)$, where $V_g$ is the Weil-Petersson volume of $\mathcal{M}_g$.   Concurrently, Mirzakhani and Petri \cite{mirzakhani2019lengths} provided a skecth proof that the probability of $X\in\mathcal{M}_g^{\geqslant L(g)}$ is bounded by $O(L(g)e^{-L(g)})$ for $L(g)<A\log g$. After that,  Lipnowski and Wright \cite{lipnowski2024towards} derived the precise volume of $\mathcal{M}_g^{<\varepsilon}$ using the principle of inclusion-exclusion. In this paper, when $L(g)\approx \log\log g$, we obtain a more precise volume estimate of $M_g^{<L(g)}$ than the one in Mirzakhani and Petri \cite{mirzakhani2019lengths}. For simplicity, $L(g)$ will be abbreviated as $L$ in the rest of the article.
	
	The main result is as following.
	
	\begin{thm}
		\label{Thm1.1}
		For any $\varepsilon>0$, let $L(g)$ be a function of genus $g$ satisfying
		\begin{equation}
			\notag		L(g)\geqslant (1+\varepsilon)\log \log g.
		\end{equation}
		Then we have
		\begin{equation}
			\notag    \operatorname{Prob}_{\mathrm{WP}}^g (X\in\mathcal{M}_g ; \ell_{sys}(X)\leqslant L(g)) =1-O(g^{-\frac{1}{2}+o(1)}),
		\end{equation}
		where we follow the notations in \cite{mirzakhani2013growth} to denote $\operatorname{Prob}_{\mathrm{WP}}^g(\mathcal{A}):=\frac{1}{V_g} \int_{\mathcal{M}_g} \mathbf{1}_{\mathcal{A}} d X$
		and let $\ell_{sys}(X)$ be the length of the shortest closed geodesic on $X$.
		
	\end{thm}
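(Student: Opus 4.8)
The plan is to pass to the complementary event and bound the Weil--Petersson probability that $X$ carries no short geodesic at all. The complement of $\mathcal{A}=\{\ell_{sys}(X)\le L\}$ is the set of $X$ admitting no closed geodesic of length $\le L$; in particular such an $X$ admits no \emph{non-separating simple} closed geodesic of length $\le L$. Hence, setting
\[
N(X) := \#\{\alpha\subset X \ \text{non-separating simple closed geodesic} : \ell_\alpha(X)\le L\},
\]
one has $\operatorname{Prob}_{\mathrm{WP}}^g(\ell_{sys}(X)>L)\le \operatorname{Prob}_{\mathrm{WP}}^g(N=0)$, so it suffices to show the right-hand side is $O(g^{-\frac12+o(1)})$. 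Restricting to a single topological type is what makes the analysis uniform, and it is exactly the quantity to which Mirzakhani's integration formula applies.

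First I would compute the mean. By Mirzakhani's integration formula for a non-separating simple closed curve,
\[
\frac{1}{V_g}\int_{\mathcal{M}_g} N(X)\,dX = \frac{1}{2V_g}\int_0^L t\,V_{g-1,2}(t,t)\,dt.
\]
Using the large-genus volume asymptotics $V_{g-1,2}(t,t)=V_{g-1,2}\left(\tfrac{\sinh(t/2)}{t/2}\right)^{2}(1+o(1))$ together with $V_{g-1,2}/V_g\to 1$, this converges to
\[
\lambda_L := \int_0^L \frac{\cosh t-1}{t}\,dt \ \sim\ \frac{e^{L}}{2L}\qquad (L\to\infty).
\]
Under the hypothesis $L\ge (1+\varepsilon)\log\log g$ one has $e^L\ge (\log g)^{1+\varepsilon}$, so $\lambda_L\gg \log g$; this is the source of the eventual polynomial-in-$g$ saving.

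The crude second-moment bound $\operatorname{Prob}(N=0)\le \operatorname{Var}(N)/(\mathbb{E}N)^2\approx 1/\lambda_L=O(Le^{-L})$ only reproduces the Mirzakhani--Petri estimate, which is merely polylogarithmic at $L\approx\log\log g$. To reach a genuine power of $g$ I would instead control all factorial moments. For each $k$, Mirzakhani's formula applied to an unordered $k$-tuple of disjoint non-separating simple closed curves with connected complement gives
\[
\mathbb{E}[(N)_k] = \frac{1}{V_g}\cdot\frac{1}{2^k k!}\int_{[0,L]^k}\Big(\prod_{i=1}^k t_i\Big)\,V_{g-k,2k}(t_1,t_1,\dots,t_k,t_k)\,dt_1\cdots dt_k + (\text{error}),
\]
where the error collects tuples that intersect, whose union separates, or whose complement is disconnected. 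Using $V_{g-k,2k}/V_g=1+O(k^2/g)$ and the $\sinh$-asymptotics in each boundary length, the main term equals $\lambda_L^k(1+o(1))/k!$ as long as $k=o(\sqrt g)$, while the degenerate types are governed by volumes of strictly lower complexity and are suppressed by extra powers of $1/g$. Establishing $\mathbb{E}[(N)_k]\le \big(\lambda_L(1+o(1))\big)^k/k!$ uniformly for $k$ up to about $\lambda_L$ shows that $N$ is dominated by a $\mathrm{Poisson}(\lambda_L)$ law in the relevant sense, whence $\operatorname{Prob}(N=0)\le \exp\!\big(-(1-o(1))\lambda_L\big)$; substituting $\lambda_L\gg\log g$ then yields the claimed $O(g^{-\frac12+o(1)})$.

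The main obstacle is the uniform control of the factorial moments as both $L$ and $k$ grow. Three effects must be handled simultaneously: the accumulation of the $O(k^2/g)$ corrections in the ratio $V_{g-k,2k}/V_g$ over the range $k\lesssim\lambda_L$; the deterioration of the $\sinh$-approximation for boundary lengths approaching $L=L(g)$; and the estimation of the degenerate $k$-tuples (intersecting curves, separating unions, disconnected complements), whose combinatorial count grows with $k$ and must be shown to stay below the main term. It is precisely the competition between these error terms and the main exponential factor $e^{-\lambda_L}$ that pins the exponent at $-\tfrac12+o(1)$; sharper error control would in principle give a far smaller bound, consistent with the heuristic $\operatorname{Prob}(N=0)\approx e^{-\lambda_L}$.
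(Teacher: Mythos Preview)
Your overall plan coincides with the paper's: bound $\operatorname{Prob}(N=0)$ for the count $N$ of non-separating simple closed geodesics of length $\le L$, compute $G_k:=\mathbb{E}\binom{N}{k}$ via Mirzakhani's integration formula, and evaluate a truncated alternating sum $\sum_{k\le n}(-1)^{k+1}G_k$ with $n\asymp\log g$ to get $\operatorname{Prob}(N\ge1)\ge 1-e^{-I}-o(1)$ with $I=\lambda_L$. The paper also reduces first to a specific threshold $L_0(g)=T^{-1}(\tfrac12\log g)$ and then invokes monotonicity in $L$, which is where the exponent $-\tfrac12$ actually appears.

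Two points need correction. First, the inference ``$\mathbb{E}[(N)_k]\le(\lambda(1+o(1)))^k/k!$ implies $\operatorname{Prob}(N=0)\le e^{-(1-o(1))\lambda}$'' is false as stated: upper bounds on factorial moments give no information about $\operatorname{Prob}(N=0)$ (take $N\equiv 0$). What is actually used is the Bonferroni-type inequality $\operatorname{Prob}(N\ge1)\ge\sum_{k=1}^{2m}(-1)^{k+1}\mathbb{E}\binom{N}{k}$, which needs two-sided control of each $G_k$ so that the alternating sum equals $1-e^{-\lambda}+o(1)$. (Relatedly, the $k$-th factorial moment of $\mathrm{Poisson}(\lambda)$ is $\lambda^k$; the expression you wrote with the extra $1/k!$ is $\mathbb{E}\binom{N}{k}$, not $\mathbb{E}[(N)_k]$.)

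Second, and this is where the paper's argument genuinely diverges from yours: once $L(g)\to\infty$ exceeds the collar constant $2\operatorname{arcsinh}1$, short geodesics are no longer forced to be disjoint. The Bonferroni sum requires $\mathbb{E}\binom{N}{k}$ over \emph{all} $k$-subsets of short non-separating geodesics, including those whose members intersect, and Mirzakhani's formula does not apply to non-embedded multicurves. You list ``tuples that intersect'' as an error term but offer no mechanism to bound it; this is not a minor residue. The paper's device is to replace $\mathcal{M}_g$ by the subset $\mathcal{N}_g^L\subset\mathcal{M}_g^{<L}$ of surfaces on which every geodesic of length $<L$ carries a collar of width $\ge L/4$. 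On $\mathcal{N}_g^L$ all short geodesics are automatically simple and pairwise disjoint, so the inclusion--exclusion identity $G_k=\sum_r\binom{r}{k}S_r$ holds exactly; a tailored version of Mirzakhani's formula on $\mathcal{N}_g^L$ expresses $G_k$ via $\operatorname{Vol}(\mathcal{N}_{g-k,2k}^L(\cdot))$ rather than $V_{g-k,2k}(\cdot)$; and the crucial input $\operatorname{Vol}(\mathcal{N}_{g,n}^L(\mathbf{L}))/V_{g,n}(\mathbf{L})=1-O(g^{-1+o(1)})$ is obtained separately from an estimate on short \emph{separating} multicurves. That last step is precisely what absorbs the ``intersecting tuples'' you flagged, and without an analogue of it your error terms remain uncontrolled.
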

	
	To prove Theorem \ref{Thm1.1}, we partition the moduli space $\mathcal{M}_g$ into $\mathcal{N}_g^L$ (see Definition \ref{Defn3.1}) and its complement. And we first give a similar result on the subset $\mathcal{N}_g^L$.

	\begin{thm}
		\label{Thm1.11}
		For $\delta>0$, let $L(g)$ be a function of genus $g$ satisfying
		\begin{equation}
			\notag		\lim\limits_{g\rightarrow\infty}L(g)=\infty\quad \text{and}\quad L(g)\leqslant \log\left((2-\delta)\log g\right),
		\end{equation}
		then we have
		
		\begin{equation}
			\notag    	\lim\limits_{g\rightarrow\infty} \frac{\operatorname{Vol}(\mathcal{N}_g^{L(g)})}{V_g}=1.
		\end{equation}
		
		In particular, there exists a function $L_0(g)$ such that
		\begin{equation}
			\notag		\frac{\operatorname{Vol}(\mathcal{N}_g^{L_0(g)})}{V_g }=1-O(g^{-\frac{1}{2}+o(1)}).
		\end{equation}
	\end{thm}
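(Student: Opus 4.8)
The plan is to read $\operatorname{Vol}(\mathcal{N}_g^{L})/V_g$ as the probability that a random $X\in\mathcal{M}_g$ carries a short non-separating simple closed geodesic, and to attack it through the moments of the natural counting function. Writing $N_L(X)$ for the number of non-separating simple closed geodesics on $X$ of length less than $L$, we have $\operatorname{Vol}(\mathcal{N}_g^{L})/V_g=\operatorname{Prob}_{\mathrm{WP}}^g(N_L\ge 1)$. By Mirzakhani's integration formula the first moment is
\[
\mathbb{E}[N_L]=\frac{1}{2V_g}\int_0^L \ell\,V_{g-1,2}(\ell,\ell)\,d\ell ,
\]
and I would insert the large-genus expansion $V_{g-1,2}(\ell,\ell)=V_g\big(\tfrac{2\sinh(\ell/2)}{\ell}\big)^2(1+o(1))$, valid uniformly for $\ell\le L$ in the admissible range, to obtain $\mathbb{E}[N_L]=(1+o(1))\int_0^L\frac{2\sinh^2(\ell/2)}{\ell}\,d\ell\sim \frac{e^{L}}{2L}$. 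The hypothesis $L\to\infty$ then forces $\mathbb{E}[N_L]\to\infty$, which is exactly the input that should make $N_L\ge1$ typical.

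For the qualitative limit I would run the second moment method. Expanding $\mathbb{E}[N_L^2]=\mathbb{E}[N_L]+\mathbb{E}[N_L(N_L-1)]$ and applying Mirzakhani's formula to ordered pairs of distinct geodesics, I would organize the pair count by topological type of the pair $(\gamma_1,\gamma_2)$: disjoint curves with connected complement (cut type $(g-2,4)$) furnish the main term $\mathbb{E}[N_L]^2(1+o(1))$, while the remaining disjoint types and, crucially, the intersecting pairs must be shown to contribute only $o(\mathbb{E}[N_L]^2)$. Together with the diagonal this should give $\operatorname{Var}(N_L)=(1+o(1))\mathbb{E}[N_L]$, and then Chebyshev's inequality yields
\[
\operatorname{Prob}_{\mathrm{WP}}^g(N_L=0)\le \frac{\operatorname{Var}(N_L)}{\mathbb{E}[N_L]^2}=\frac{1+o(1)}{\mathbb{E}[N_L]}\longrightarrow 0 ,
\]
which is precisely $\operatorname{Vol}(\mathcal{N}_g^{L})/V_g\to 1$. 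It is in verifying this variance estimate that the threshold $e^{L}\le(2-\delta)\log g$ should enter, keeping the correction factors in the volume ratios bounded away from their critical size.

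The bare second moment is too weak for the quantitative claim, since it only gives $O(1/\mathbb{E}[N_L])=O(Le^{-L})$, which in this regime is $g^{-o(1)}$, far above $g^{-1/2}$. I would therefore upgrade to a Poisson-type lower-tail estimate: controlling the factorial moments $\mathbb{E}[N_L(N_L-1)\cdots(N_L-k+1)]=(1+o(1))^k\,\mathbb{E}[N_L]^k$ for all $k$ up to order $\mathbb{E}[N_L]$ (equivalently, a Chen–Stein comparison with a Poisson variable of mean $\mathbb{E}[N_L]$) should give $\operatorname{Prob}_{\mathrm{WP}}^g(N_L=0)\le e^{-(1-o(1))\mathbb{E}[N_L]}$. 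Choosing $L_0(g)$ so that $\mathbb{E}[N_{L_0}]=(\tfrac12+o(1))\log g$, i.e. $L_0=\log\log g+\log\log\log g+o(1)$, then produces the advertised rate $1-O(g^{-1/2+o(1)})$; note that $L_0=(1+o(1))\log\log g$ lies at the very edge of the range, so the tail estimate must be pushed to that scale.

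The main obstacle throughout is the uniform control of the error terms as $L\to\infty$. The sinh-approximation $V_{g,n}(\ell_1,\dots,\ell_n)\approx V_{g,n}\prod_i\frac{\sinh(\ell_i/2)}{\ell_i/2}$ and the ratios $V_{g-k,2k}/V_g$ carry relative errors that grow with $e^{L}$, and it is exactly the bound $e^{L}\le(2-\delta)\log g$ that keeps the expansion parameter under control so that every off-diagonal and correlated configuration stays subdominant. The delicate points are showing that the intersecting-pair contribution to the second moment is genuinely negligible, and that the accumulated factorial-moment errors remain harmless uniformly in $k$ up to $\sim\log g$; this uniform bookkeeping of the large-genus volume asymptotics is the technical heart of the argument.
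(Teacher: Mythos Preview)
Your proposal rests on the identification $\operatorname{Vol}(\mathcal{N}_g^L)/V_g = \operatorname{Prob}^g_{\mathrm{WP}}(N_L \geq 1)$, but this misreads the definition of $\mathcal{N}_g^L$. In the paper, $\mathcal{N}_g^L$ is the set of $X \in \mathcal{M}_g^{<L}$ on which \emph{every} primitive closed geodesic of length $<L$ admits a collar of width at least $L/4$, with all such collars pairwise disjoint. This is a subset of $\mathcal{M}_g^{<L}$, in general strictly smaller than (and not comparable to) $\{X : N_L(X) \geq 1\}$. A complete second-moment or Poisson analysis of $N_L$ on $\mathcal{M}_g$ would therefore only establish that $\{N_L \geq 1\}$, hence $\mathcal{M}_g^{<L}$, has nearly full volume; it gives no lower bound on the volume of the smaller set $\mathcal{N}_g^L$, which is what the theorem asserts.

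The paper's argument is structurally close to your factorial-moment scheme, but the restriction to $\mathcal{N}_g^L$ is the whole point rather than an obstacle. On $\mathcal{N}_g^L$ all short geodesics are automatically simple and disjoint, so one defines $G_k$ as the integral over $\mathcal{N}_g^L$ of the number of unordered $k$-tuples of disjoint non-separating short geodesics and evaluates each $G_k$ by a Mirzakhani-type integration formula proved \emph{on $\mathcal{N}_g^L$}, obtaining $G_k = \frac{I^k}{k!} V_g\bigl(1 + O(g^{-1+o(1)})\bigr)$ with $I = \tfrac{1}{2}\int_0^L x\bigl(\tfrac{\sinh(x/2)}{x/2}\bigr)^2 dx$. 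The needed input is that $\operatorname{Vol}\bigl(\mathcal{N}_{g-k,2k}^L(\cdot)\bigr)/V_{g-k,2k}(\cdot) = 1 - O(g^{-1+o(1)})$, established separately by showing that the complement of $\mathcal{N}_{g-k,2k}^L$ sits inside the locus carrying a short separating multicurve. Truncated inclusion--exclusion at even $n \asymp \log g$ then gives $\operatorname{Vol}(\mathcal{N}_g^L)/V_g \geq 1 - e^{-I} - (\text{errors})$, and choosing $L_0$ with $I \approx \tfrac12\log g$ yields the quantitative rate. In particular, the collar condition is precisely what lets the paper bypass the intersecting-pair contributions you flag as the technical crux of your plan: those terms never appear because the integration is carried out on $\mathcal{N}_g^L$ from the outset.
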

	
	To obtain Theorem \ref{Thm1.11}, we first show $\mathcal{N}_g^L$ is a closed set in $\mathcal{M}_g^{<L}$, and thus measurable in Sect. \ref{S3}. And we also need to verify Mirzakhani's integral formula on $\mathcal{N}_{g,n}(\mathbf{L})$ as following, see the details in Sect. \ref{S4}.
	

	\begin{thm}
		\label{Thm1.2}
		Suppose $\gamma_i$ are simple primitive closed curves. For any multi-curve $\gamma=\sum\limits_{i=1}^{k}c_i\gamma_i$, $\ell_{\gamma_i}(X)< L$, the integral of $f_\gamma$ over $\mathcal{N}_{g,n}^L(\mathbf{L})$ with respect to the Weil-Petersson volume form is given by
		\begin{equation}
			\notag \int_{\mathcal{N}_{g,n}^L(\mathbf{L})}f_{\gamma}(X)dX=\frac{2^{-M(\gamma)}}{|Sym(\gamma)|}\int_{x\in\mathbb{R}^k_+}f(|x|)V_{g,n}^L(\Gamma,x,\beta,\mathbf{L})x\cdot dx,
		\end{equation}
		where $\Gamma=(\gamma_1,\dots,\gamma_k)$, $|x|=\sum\limits_{i=1}^{k}c_ix_i$, $x\cdot dx=x_1\dots x_k\cdot dx_1\wedge\dots\wedge dx_k$,\, $\mathbf{L}=(L_1,\dots,L_n),\, L_i\leqslant L$ for $i=1,\dots,n$ and
		\begin{equation}
			\notag M(\gamma):= |\{i|\gamma_i \text{ separates off a one-handle from } S_{g,n}\}|,
		\end{equation}
		and $V_{g,n}^L(\Gamma,x,\beta,\mathbf{L})$ is defined by
		\begin{equation}
			\notag V_{g,n}^L(\Gamma,x,\beta,\mathbf{L}):=\prod_{i=1}^{s}\operatorname{Vol}\left(\mathcal{N}_{g_i,n_i}^L(\ell_{A_i})\right),
		\end{equation}
		where
		\begin{equation}
			\notag S_{g,n}(\gamma)=\bigcup_{i=1}^{s}S_i,
		\end{equation}
		$S_i\simeq S_{g_i,n_i}$, and $A_i=\partial S_i$.
	\end{thm}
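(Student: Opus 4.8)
The plan is to establish this as a direct analogue of Mirzakhani's integral formula, restricted to the truncated region $\mathcal{N}_{g,n}^L(\mathbf{L})$, so the proof should follow her covering-space argument while carefully tracking how the length cutoff $L$ propagates to the pieces of the cut surface. The starting point is the unfolding identity for the function $f_\gamma$. Recall that for a multi-curve $\gamma=\sum_{i=1}^k c_i\gamma_i$ one defines $f_\gamma(X)=\sum_{\alpha\in\mathrm{Mod}\cdot\gamma} f(\ell_\alpha(X))$, where the sum runs over the mapping class group orbit of $\gamma$. The key structural fact is that the intermediate cover $\mathcal{M}_{g,n}^\Gamma$ of $\mathcal{M}_{g,n}$, consisting of pairs $(X,\text{ordered geodesic representatives of }\Gamma)$, fibers over the space of length parameters $x\in\mathbb{R}^k_+$ with fiber the product of moduli spaces of the complementary pieces $S_i$.

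First I would verify that the truncation is compatible with this fibration. Because every curve in $\Gamma$ satisfies $\ell_{\gamma_i}(X)<L$ and every boundary $L_i\leqslant L$, cutting $X$ along the geodesic representatives of $\Gamma$ produces pieces $S_i\simeq S_{g_i,n_i}$ all of whose boundary components have length at most $L$; crucially, the condition defining $\mathcal{N}_{g,n}^L$ (namely that all short geodesics lie within the prescribed collection) restricts on each piece to exactly the defining condition of $\mathcal{N}_{g_i,n_i}^L$, so the fiber over $x$ is precisely $\prod_{i=1}^s \mathcal{N}_{g_i,n_i}^L(\ell_{A_i})$. This is the step where one must be careful: I would check that a short geodesic on $X$ that is disjoint from $\Gamma$ must lie entirely in one piece $S_i$, and conversely that the gluing of admissible pieces yields an admissible surface, so that the truncated region decomposes cleanly. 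Second, I would apply the Weil-Petersson symplectic reduction (the twist-length Fenchel--Nielsen coordinates) along the curves of $\Gamma$, which contributes the factor $x\cdot dx = x_1\cdots x_k\, dx_1\wedge\cdots\wedge dx_k$ from integrating out the twist parameters over the circles of circumference $x_i$.

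Third I would account for the combinatorial constants. The factor $1/|\mathrm{Sym}(\gamma)|$ arises because the stabilizer of $\gamma$ in the mapping class group permutes the components of $\Gamma$ whenever they are topologically interchangeable, so the cover $\mathcal{M}_{g,n}^\Gamma$ is an $|\mathrm{Sym}(\gamma)|$-fold quotient relative to the ordered configuration; this is identical to Mirzakhani's treatment and transfers verbatim. The factor $2^{-M(\gamma)}$ is the genuinely delicate bookkeeping: whenever $\gamma_i$ separates off a one-handle, the corresponding piece $S_{1,1}$ has an elliptic involution fixing the boundary, producing a hyperelliptic $\mathbb{Z}/2$ symmetry that halves the volume contribution, and each such piece contributes one factor of $\tfrac12$, giving $2^{-M(\gamma)}$ in total.

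The main obstacle I anticipate is the first step, establishing that $\mathcal{N}_{g,n}^L(\mathbf{L})$ measurably decomposes along $\Gamma$ into the product $\prod_i \mathcal{N}_{g_i,n_i}^L(\ell_{A_i})$ with no interaction between the cutting condition and the truncation condition. One must show that the short-geodesic constraint defining $\mathcal{N}$ is local with respect to the decomposition, i.e.\ that it does not create geodesics crossing the cutting curves that would violate admissibility; since geodesics disjoint from $\Gamma$ are confined to a single complementary piece and the curves of $\Gamma$ are themselves among the allowed short curves, this locality should hold, but it requires the closedness and measurability of $\mathcal{N}_g^L$ established in Sect.~\ref{S3}. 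Once the fibered decomposition is in place, the remaining integration is formally identical to the classical case, and the stated formula follows by collecting the three factors $2^{-M(\gamma)}$, $1/|\mathrm{Sym}(\gamma)|$, and $x\cdot dx$ together with the fiber volume $V_{g,n}^L(\Gamma,x,\beta,\mathbf{L})$.
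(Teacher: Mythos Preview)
Your outline follows the same covering-space and symplectic-reduction strategy as the paper, and the bookkeeping for $|\mathrm{Sym}(\gamma)|$ and $2^{-M(\gamma)}$ is handled exactly as the paper does. There is, however, one genuine gap.

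You describe the defining condition of $\mathcal{N}_{g,n}^L(\mathbf{L})$ as ``all short geodesics lie within the prescribed collection,'' and you locate the main difficulty in showing that this condition is local with respect to the decomposition along $\Gamma$. But the actual definition is stronger: every primitive closed geodesic (and boundary) of length $<L$ must carry a collar of width $\geqslant\tfrac{1}{4}L$, and these collars (and half-collars) must be pairwise disjoint. This is not a cosmetic difference; it is exactly what makes your second step go through. The Fenchel--Nielsen twist flow $\phi_{\gamma_i}^{t}$ is only guaranteed to preserve the truncated region if twisting along $\gamma_i$ does not destroy the collar structure around the other short geodesics $\delta$. The paper's point is that since $\gamma_i$ has its own $\tfrac{1}{4}L$-collar disjoint from $\mathcal{C}(\delta,\tfrac{1}{4}L)$, the support of the twist is disjoint from every other collar, so $\phi_{\gamma_i}^{t}(\mathcal{C}(\delta,\tfrac{1}{4}L))=\mathcal{C}(\delta,\tfrac{1}{4}L)$ and the defining condition is preserved. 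Without this observation you cannot conclude that the level set $\ell_\Gamma^{-1}(x)\cap\mathcal{T}_{g,n}^L(\mathbf{L})$ is a full twist-torus bundle, and hence you cannot integrate out the twists to obtain the factor $x\cdot dx$.

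The same collar condition is also what drives the locality you want in the first step: because $\mathcal{C}(\gamma_i,\tfrac{1}{4}L)\cap\mathcal{C}(\delta,\tfrac{1}{4}L)=\varnothing$ for every short $\delta$, cutting along $\Gamma$ leaves each collar (and half-collar) intact inside a single piece $S_{g_i,n_i}$, and the new boundaries inherit half-collars of width $\geqslant\tfrac{1}{4}L$. Thus the pieces land in $\mathcal{N}_{g_i,n_i}^L(\ell_{A_i})$, and conversely. Once you make the collar-width condition explicit and use it in both places, your argument coincides with the paper's.
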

	
	To estimate the volume of $\mathcal{N}_{g,n}^L(\ell_{A_i})$ in Theorem \ref{Thm1.2}, we define an important quantity on $X\in\mathcal{M}_g$ (or $\mathcal{M}_{g,n}^L(\mathbf{L})$) that
	\begin{equation}
		\notag\mathcal{L}_1(X):= \min\{\ell_{\gamma}(X)| \gamma=\gamma_1+\dots+\gamma_k \text{ is a simple closed multi-geodesics separating } X \}.
	\end{equation}
	
	Then we prove the following estimate:
	\begin{thm}
		Let $n=n(g)$ and $L_i\leqslant L=L(g)$ for $i=1,\dots,n$, such that
		\begin{equation}
			\notag \varlimsup\limits_{g\rightarrow\infty}\frac{\log n(g)}{\log g}=0,\, \varlimsup\limits_{g\rightarrow\infty}\frac{\log L(g)}{\log g}=0.
		\end{equation}
		Then we have
		\begin{equation}
			\notag \frac{1}{V_{g,n}(\mathbf{L})}\operatorname{Vol}(E_{g,n}^L(\mathbf{L}))\leqslant c_1\frac{\widetilde{L}^2}{g}e^{\frac{L}{2}}+c_2e^{\frac{L}{2}}\frac{\widetilde{L}^{7}}{g^2}
			+c_3e^{2L}\frac{n^{10}}{g^{9}}+c_4e^{2L}\frac{n^{12}}{g^{10}},
		\end{equation}
		where $\widetilde{L}=\max\{L,n\}$, $c_1,c_2,c_3,c_4>0$ are some positive constants and $E_{g,n}^L(\mathbf{L})$ is defined as a subset of $\mathcal{M}_{g,n}(\mathbf{L})$ that
		\begin{equation}
			\notag E_{g,n}^L(\mathbf{L}) :=\{X\in\mathcal{M}_{g,n}(\mathbf{L})| \mathcal{L}_1(X)\leqslant L\}.
		\end{equation}
		
		In particular, if we take $n<c\log g$ and $ L=(2-\varepsilon)\log g$ for any $0<\varepsilon<2$ and $0\leqslant c$, then
		\begin{equation}
			\notag \lim\limits_{g\rightarrow\infty}\frac{1}{V_{g,n}(\mathbf{L})}\operatorname{Vol}(E_{g,n}^{(2-\varepsilon)\log g}(\mathbf{L}))=0.
		\end{equation}
	\end{thm}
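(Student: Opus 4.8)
The plan is to bound $\operatorname{Vol}(E_{g,n}^L(\mathbf{L}))$ by a union bound over separating multi-curves and then feed each topological type into Mirzakhani's integral formula. Since $\mathcal{L}_1(X)\leqslant L$ forces the existence of at least one separating simple closed multi-geodesic of total length at most $L$, we have the pointwise estimate
\begin{equation}
\notag
\mathbf{1}_{E_{g,n}^L(\mathbf{L})}(X)\leqslant \sum_{[\gamma]}\ \sum_{\alpha\sim\gamma}\mathbf{1}\{\ell_{\alpha}(X)\leqslant L\},
\end{equation}
where $\alpha\sim\gamma$ means $\alpha$ lies in the mapping class group orbit of $\gamma$, and the outer sum runs over topological types $[\gamma]$ of separating multi-curves $\gamma=\gamma_1+\dots+\gamma_k$ (including the subtle types whose individual components are non-separating but whose union separates $S_{g,n}$). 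Integrating over $\mathcal{M}_{g,n}(\mathbf{L})$ and applying Mirzakhani's integral formula turns each orbit-sum into
\begin{equation}
\notag
\frac{2^{-M(\gamma)}}{|Sym(\gamma)|}\int_{x\in\mathbb{R}^k_+,\,|x|\leqslant L}\ \prod_{i=1}^{s}V_{g_i,n_i}(x,\mathbf{L})\ x\cdot dx ,
\end{equation}
so that $\operatorname{Vol}(E_{g,n}^L(\mathbf{L}))$ is dominated by a sum of such integrals over all topological types of separating configurations.

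The analytic inputs I would use are the Mirzakhani--Zograf asymptotics $V_{g,n}\asymp (2g-3+n)!\,(4\pi^2)^{2g-3+n}/\sqrt{g}$ together with the $\sinh$-type control $V_{g,n}(\ell_1,\dots,\ell_m)\lesssim V_{g,n}\prod_{j=1}^{m}\frac{\sinh(\ell_j/2)}{\ell_j/2}$ for the boundary-length dependence. The decisive point is that this exponential growth is only felt by the \emph{large-genus} piece: a topologically small piece cut off by the multi-curve (for instance a one-handle $S_{1,1}$, whose volume polynomial $V_{1,1}(x)$ has degree two) contributes only polynomially in the boundary length, while the complementary large piece supplies a single factor $\frac{\sinh(x/2)}{x/2}\sim e^{x/2}/x$. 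For a single curve splitting off a one-handle this produces, after $\int_0^L x^2\sinh(x/2)\,dx\sim L^2e^{L/2}$ and the ratio $V_{g-1,1}/V_g\asymp 1/g$, exactly the leading term $c_1\widetilde{L}^2e^{L/2}/g$. The remaining single-curve and two-piece types yield the subleading $c_2e^{L/2}\widetilde{L}^{7}/g^{2}$, and the configurations in which several of the $n$ boundary labels are collected onto a small piece, or in which the separating multi-curve has several components, yield the $c_3e^{2L}n^{10}/g^{9}$ and $c_4e^{2L}n^{12}/g^{10}$ terms once one applies the cruder bound $\prod_j e^{\ell_j/2}\leqslant e^{2L}$.

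I would then organize the summation by three parameters: the number $k$ of curves, the distribution of the genus among the pieces $S_i$, and the distribution of the $n$ boundary labels. Summing over the genus distribution produces a convergent geometric-type series governed by the ratios $\prod_i V_{g_i,n_i}/V_{g,n}$, which decay in $g$; summing over the label distribution produces the polynomial factors $n^{10},n^{12}$ through the binomial count of how the labels may sit on the small piece. The constraints $\varlimsup\log n/\log g=0$ and $\varlimsup\log L/\log g=0$ are used precisely to keep the error terms in both the volume asymptotics and the $\sinh$ approximation uniformly bounded across the entire range of summation. The main obstacle is exactly this combinatorial bookkeeping: one must bound the number of topological types of separating multi-curves while keeping the volume-ratio estimates uniform in $g$, so that collecting every contribution reproduces only the four displayed terms and not a divergent series.

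Finally, the ``in particular'' statement is immediate from the main bound. Substituting $n<c\log g$ and $L=(2-\varepsilon)\log g$ gives $\widetilde{L}\asymp\log g$, $e^{L/2}=g^{1-\varepsilon/2}$ and $e^{2L}=g^{4-2\varepsilon}$, so each of the four terms is of the form $(\log g)^{O(1)}g^{-\delta}$ with $\delta>0$, namely of orders $g^{-\varepsilon/2}$, $g^{-1-\varepsilon/2}$, $g^{-5-2\varepsilon}$ and $g^{-6-2\varepsilon}$ respectively. Hence $\operatorname{Vol}(E_{g,n}^{(2-\varepsilon)\log g}(\mathbf{L}))/V_{g,n}(\mathbf{L})\to 0$ as $g\to\infty$.
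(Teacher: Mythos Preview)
Your plan is correct and matches the paper's proof: both use the union bound over topological types of separating multi-curves, apply Mirzakhani's integral formula to each type, feed in the $\sinh$-bound $V_{g,n}(\ell)\leqslant V_{g,n}\prod\frac{\sinh(\ell_i/2)}{\ell_i/2}$ together with the Mirzakhani--Zograf ratio estimates, and then sum. The paper's organizational device, which you leave implicit, is to stratify by the Euler characteristic $m=2g_0-2+k+n_0$ of the smaller piece with a fixed threshold $P=10$: the case $m=1$ (one-handle or pair of pants) gives the $\widetilde{L}^2e^{L/2}/g$ term; the cases $2\leqslant m\leqslant 10$ are handled by bounding the small-piece volume polynomial by $c(m)L^{3m}$ and give $\widetilde{L}^{3m+1}e^{L/2}/g^m$, whose worst case $m=2$ is your $\widetilde{L}^7e^{L/2}/g^2$; and the tail $m\geqslant 11$ uses the squared $\sinh$ bound on both pieces (hence $e^L$, and a further $\sum L^{2k}/(2k)!\leqslant e^L$ gives the $e^{2L}$), with the specific exponents $n^{10}/g^9$ and $n^{12}/g^{10}$ coming directly from the choice $P=10$.
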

	\begin{rmk}
		By this theorem, the volume of the subset $\mathcal{N}_{g,n}^L(\mathbf{L})$ can be derived. For details, see Corollary \ref{C4.2}.
	\end{rmk}
	
	\bigskip
	This paper is organized as follows. We first show the subset $\mathcal{N}_g^{L}$ within $\mathcal{M}_g^{<L}$ is closed, and thus measurable under the Weil-Petersson metric in Sect. \ref{S3}. Then we formulate Mirzakhani's integral formula on $\mathcal{N}_{g,n}^{L}(\mathbf{L})$ in Sect. \ref{S4}. Next in Sect. \ref{S5}, we use the idea of Nie, Wu and Xue \cite{nie2023large} to estimate the volume of the subset $\mathcal{N}_{g,n}^L(\mathbf{L})$. In particular, we show that the total length of a multi-geodesic, which divided the hyperbolic surface in $\mathcal{M}_{g,n}(\mathbf{L})$ into two parts, is also almost greater than $(2-\varepsilon)\log g$ under the Weil-Petersson metric, which we use to estimate the volume of $\mathcal{N}_{g,n}^{L}(\mathbf{L})$ where $n,L=g^{o(1)}$. Combined the properties of $\mathcal{N}_{g,n}^L(\mathbf{L})$ established in previous parts, we estimate the growth rate on the volume of $\mathcal{N}_g^L, \mathcal{M}_g^{<L}$, and thus prove Theorem \ref{Thm1.11} and Theorem \ref{Thm1.1} in Sect. \ref{S6}.\\

	\section{\noindent{\bf The closedness of $\mathcal{N}_g^L$}}
	\label{S3}
	
	\subsection{\noindent{Collar theorem}}

	Let $\alpha$ be a closed curve on a hyperbolic surface $X$, and we denote the length of the curve $\alpha$ as $\ell(\alpha)$.
	We consider a simple closed geodesic $\gamma$ on $X$, according to the collar theorem \cite{keen1974collars}, there exists a collar $\mathcal{C}(\gamma,w(\gamma))$ with width $w(\gamma)$ that
	\begin{equation}
		\notag \mathcal{C}(\gamma,w(\gamma))=\{p\in X| \operatorname{dist}(p,\gamma)< w(\gamma)\}
	\end{equation}
	is isometric to the cylinder $(-w(\gamma),w(\gamma))\times S^1$ with the Riemannian metric $ds^2=d\rho^2+\ell^2(\gamma)\cosh ^2\rho dt^2$ where $w(\gamma)\geqslant\operatorname{arcsinh}\frac{1}{\sinh(\frac{1}{2}\ell(\gamma))}$.
	
	If $\ell(\gamma)<2\operatorname{arcsinh}1$, then the width of its associated collar is at least $\operatorname{arcsinh}1$. Consequently, if two geodesics $\alpha_1$ and $\alpha_2$ satisfy $\ell(\alpha_1),\ell(\alpha_2)< 2\operatorname{arcsinh}1$, they must be disjoint.
	
	Similarly, for a geodesic boundary $\beta$, there also exists a corresponding half collar $\mathcal{C}(\beta,w(\beta))$ with width $w(\beta)$ such that
	\begin{equation}
		\notag  \mathcal{C}(\beta,w(\beta))=\{p\in X|\operatorname{dist}(p,\beta)< w(\beta)\}
	\end{equation}
	is isometric to the cylinder $[0,w(\beta))\times S^1$ with the Riemannian metric $ds^2=d\rho^2+\ell^2(\beta)\cosh ^2\rho dt^2$, where $w(\beta)\geqslant\operatorname{arcsinh}\frac{1}{\sinh(\frac{1}{2}\ell(\gamma))}$.
	
	Furthermore, all collars $\mathcal{C}(\gamma,\frac{1}{2}\operatorname{arcsinh}1)$ and half collars $\mathcal{C}(\beta,\frac{1}{2}\operatorname{arcsinh}1)$ are mutually disjoint, for geodesics $\gamma$ and boundaries $\beta$ with length $\ell(\gamma),\ell(\beta)< 2\operatorname{arcsinh}1$. We now replace $2\operatorname{arcsinh}1$ with $L$.
	
	\subsection{\noindent{The closedness of $\mathcal{N}_g^L$}}
	
	Let $L=L(g)>0$ be a fixed positive number or a positive function depending on the genus $g$. Now we define $\mathcal{N}_g^L$ as a subset of $\mathcal{M}_g^{<L}$.
	\begin{defn}
		\label{Defn3.1}   Let $\mathcal{N}_g^L$ be the subset of $\mathcal{M}_g^{< L}$ such that
		\begin{align}
			\notag \mathcal{N}_g^L:=\{&X\in\mathcal{M}_g^{<L}|\text{all primitive closed geodesics $\gamma$ on $X$ whose length $\ell(\gamma)< L$}\\
			\notag  &\text{have collars with widths $\geqslant \frac{1}{4}L$ and the interior of the collars $\mathcal{C}(\gamma,\frac{1}{4}L)$}\\ \notag&\text{are disjoint}\}.
		\end{align}
	\end{defn}
	For the space of the surfaces with $n$ marked geodesic boundary components of lengths $\mathbf{L}$, $L_i<L$, we give a similar definition as following.
	\begin{defn}
		Let $\mathcal{N}_{g,n}^L(\mathbf{L})$ be a subset of $\mathcal{M}_{g,n}(\mathbf{L})$ that
		\begin{align}
			\notag\mathcal{N}_{g,n}^L(\mathbf{L}):=\{&X\in\mathcal{M}_{g,n}(\mathbf{L})|\text{all primitive closed geodesics $\gamma$ on $X$ whose length $\ell(\gamma)$}\\
			\notag& \text{$< L$ (if $X$ has) have collars with width $\geqslant\frac{1}{4}L$. All the length of} \\
			\notag& \text{boundary geodesics $\beta<L$ and have half-collars with width $\geqslant\frac{1}{4}L$}\\
			\notag& \text{and all the interior of these collars and half-collars are disjoint}\}.
		\end{align}
	\end{defn}
	\begin{rmk}
		By definition, there exists at least one closed short geodesic on $X\in\mathcal{N}_g^L$, however, there might be no closed short geodesic on $X\in\mathcal{N}^L_{g,n}(\mathbf{L})$ except for boundaries. What's more, all primitive short geodesics on $X\in\mathcal{N}_g^L$ or $X\in\mathcal{N}_{g,n}^L(\mathbf{L})$ are simple and disjoint. Therefore, there can be at most $3g-3$ short geodesics on $X\in\mathcal{N}_g^L$ and at most $3g-3+n$ short geodesics on $X\in\mathcal{N}_{g,n}^L(\mathbf{L})$.
	\end{rmk}
	To get the Mirzakhani's integral formula on $\mathcal{N}_g^L$, we need to prove the closedness of $\mathcal{N}_g^L$ within $\mathcal{M}_g^{<L}$, which implies $\mathcal{N}_g^L$ is measurable.
	\begin{thm}
		$\mathcal{N}_g^L$ is a closed subset within $\mathcal{M}_g^{<L}$. It follows that $\mathcal{N}_g^L$ is a Borel measurable set.
	\end{thm}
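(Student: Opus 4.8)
The plan is to prove that $\mathcal{N}_g^L$ is sequentially closed in the metric space $(\mathcal{M}_g^{<L},\mathrm{WP})$; since the latter is metrizable this is equivalent to closedness, and a closed set is automatically Borel. So I would take a sequence $X_k\in\mathcal{N}_g^L$ with $X_k\to X$ in $\mathcal{M}_g^{<L}$ and show $X\in\mathcal{N}_g^L$. First I would upgrade this convergence: since the mapping class group acts properly discontinuously on $\mathcal{T}_g$ and $\mathcal{M}_g=\mathcal{T}_g/\mathrm{MCG}$, convergence in $\mathcal{M}_g$ lifts, after applying suitable mapping classes, to convergence $\widetilde X_k\to\widetilde X$ in $\mathcal{T}_g$, i.e. to smooth convergence of the hyperbolic metrics $h_k\to h$ on the fixed surface $S_g$. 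Under this convergence every quantity I need is continuous: for each isotopy class $\gamma$ the length $\ell_\gamma(\cdot)$ is continuous, the geodesic representatives converge smoothly, and the distances between them (as well as the normal exponential maps used to build collars) converge.

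Next I would reduce membership in $\mathcal{N}_g^L$ to closed conditions. Recall $X\in\mathcal{N}_g^L$ means that every primitive closed geodesic $\gamma$ with $\ell_\gamma(X)<L$ admits an embedded collar $\mathcal{C}(\gamma,\tfrac{1}{4}L)$ and that the interiors of these collars are pairwise disjoint. Disjointness of the open collars of two such geodesics $\gamma,\gamma'$ is exactly $\operatorname{dist}(\gamma,\gamma')\geq\tfrac{1}{2}L$, while embeddedness of $\mathcal{C}(\gamma,\tfrac{1}{4}L)$ is injectivity of the width-$\tfrac{1}{4}L$ normal exponential map, equivalently $w(\gamma,X)\geq\tfrac{1}{4}L$ where $w(\gamma,X)$ denotes the maximal embedded collar half-width (the supremum of $w$ for which $\mathcal{C}(\gamma,w)$ is an embedded collar); both are non-strict inequalities on continuous geometric quantities. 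Now fix geodesics $\gamma,\gamma'$ on $X$ with $\ell_\gamma(X),\ell_{\gamma'}(X)<L$. By continuity of length, $\ell_\gamma(X_k),\ell_{\gamma'}(X_k)\to\ell_\gamma(X),\ell_{\gamma'}(X)<L$, so for all large $k$ they are short geodesics on $X_k$; since $X_k\in\mathcal{N}_g^L$ they satisfy $w(\gamma,X_k)\geq\tfrac{1}{4}L$ and $\operatorname{dist}_{X_k}(\gamma,\gamma')\geq\tfrac{1}{2}L$. Passing to the limit I would conclude $w(\gamma,X)\geq\tfrac{1}{4}L$ and $\operatorname{dist}_X(\gamma,\gamma')\geq\tfrac{1}{2}L$, so $X$ satisfies the defining conditions for every short geodesic and hence $X\in\mathcal{N}_g^L$.

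The main obstacle is that the collection of short geodesics is not fixed along the sequence, so two points need care. The first is that no short geodesic of the limit is invisible to the sequence: this is handled by the strict inequality $\ell_\gamma(X)<L$ in the definition, which forces $\ell_\gamma(X_k)<L$ for large $k$, so the conditions valid on $X_k$ genuinely apply to $\gamma$; conversely, geodesics that were short on $X_k$ but reach length $\geq L$ in the limit simply drop out and impose no constraint on $X$. The second and more delicate point is that the conditions must be genuinely closed, which amounts to upper semicontinuity of $w(\gamma,\cdot)$ and lower semicontinuity of $\operatorname{dist}(\gamma,\gamma')$ under $h_k\to h$. The essential observation is that a failure of embeddedness or of disjointness of the \emph{open} $\tfrac{1}{4}L$-collars at the limit would be a self-intersection or crossing occurring at distance strictly less than $\tfrac{1}{4}L$ (resp.\ $\tfrac{1}{2}L$); by smooth convergence of the metrics and of the normal exponential maps such a transverse coincidence would persist for all large $k$, contradicting $X_k\in\mathcal{N}_g^L$. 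Thus coincidences can appear in the limit only as tangencies at distance exactly $\tfrac{1}{4}L$ (resp.\ $\tfrac{1}{2}L$), which are permitted by the non-strict inequalities, and therefore the limit surface $X$ lies in $\mathcal{N}_g^L$.
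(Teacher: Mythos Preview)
Your proposal is correct and follows essentially the same strategy as the paper: prove sequential closedness by lifting convergence in $\mathcal{M}_g^{<L}$ to a controlled geometric convergence, observe that each short geodesic on the limit is already short along the tail of the sequence, and then pass the non-strict defining inequalities (embedded $\tfrac14 L$-collar, pairwise $\tfrac12 L$-separation) to the limit. The paper carries this out with slightly different machinery: instead of lifting to $\mathcal{T}_g$ and invoking smooth convergence of metrics, it uses Buser's explicit $k_n$-quasi-isometries built from Fenchel--Nielsen coordinates (extending the short geodesics to a pants decomposition and citing \cite[Lemmas~3.2.5, 3.2.6, 3.3.8, Theorem~6.4.2]{buser2010geometry}) to compare collars directly, obtaining $w(X)\geq k_n^{-1}w(X_n)$ with $k_n\to 1$. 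Your abstract continuity/semicontinuity argument and the paper's quantitative quasi-isometry argument are two implementations of the same idea; the paper's version is more self-contained with precise references, while yours is cleaner but leaves the semicontinuity of the maximal collar width as a (true, standard) black box.
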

	\begin{proof}
		We consider a sequence $\{X_n\}$ in $\mathcal{N}_g^L$ and $X_n\rightarrow X\in\mathcal{M}_g^{<L}$. Then there is a sequence of $q_n$-quasi isometries $\{f_n\}$ with $f_n:X_n\rightarrow X$ and $q_n\rightarrow 1$.
		
		For any two geodesics $\gamma_i$ and $\gamma_j$ in $X$ whose lengths $\ell(\gamma_i)$ and $\ell(\gamma_j)$ are shorter than $L$, we claim that $\gamma_i$ and $\gamma_j$ are disjoint. Otherwise, $q_n\rightarrow 1$, there exists $n_0$ that
		\begin{equation}
			\notag q_{n_0}\ell(\gamma_i), q_{n_0}\ell(\gamma_j)< L.
		\end{equation}
		Let $\gamma_i^{n_0}$, $\gamma_j^{n_0}$ on $X_{n_0}$ be the geodesics which are homotopic to $f_{n_0}^{-1}(\gamma_i)$, $f_{n_0}^{-1}(\gamma_j)$, respectively, with lengths satisfying
		\begin{align}
			\notag\ell(\gamma_i^{n_0})&\leqslant \ell(f_{n_0}^{-1}(\gamma_i))\leqslant q_{n_0}\ell(\gamma_i)< L,\\
			\notag\ell(\gamma_j^{n_0})&\leqslant \ell(f_{n_0}^{-1}(\gamma_j))\leqslant q_{n_0}\ell(\gamma_j)< L.
		\end{align}
		
		Hence, the geodesics $\gamma_i^{n_0}$ and $\gamma_j^{n_0}$ are two geodesics whose lengths are shorter than $L$ on $X_{n_0}$. Since $f_{n_0}^{-1}(\gamma_i)$ and $f_{n_0}^{-1}(\gamma_j)$ intersect transversely, by \cite[Theorem 1.6.7]{buser2010geometry}, $\gamma_i^{n_0}$ and $\gamma_j^{n_0}$ intersect transversely, which is a contradiction.\\
		
		Due to the discreteness of the length spectrum of a hyperbolic surface, there are only finitely many primitive closed geodesics on $X\in\mathcal{M}_g$ with length less than $L$. We denote these simple geodesics by $\gamma_1,\dots,\gamma_m$.
		
		We already know that these geodesics are pairwise disjoint and simple. Furthermore, by collar theorem, there exists additional simple closed geodesics $\gamma_{m+1},\dots,\gamma_{3g-3}$ that, together with $\gamma_1,\dots,\gamma_m$, decompose $X$ into pairs of pants.
		
		Next we consider the map from the Teich\"uller space $\mathcal{T}_g$ to the moduli space by the quotient map $q:\mathcal{T}_g\rightarrow\mathcal{M}_g$, where
		\begin{equation}
			\notag  \mathcal{M}_g=\mathcal{T}_g/\text{Mod}_g,
		\end{equation}
		and $\text{Mod}_g$ is the modular group.\\
		
		Let $G$ denote the marked cubic graph corresponding to the pants decomposition. By \cite[Corollary 6.2.8]{buser2010geometry} and \cite[Lemma 6.3.4]{buser2010geometry}, if follows that there exists an open neighborhood $U$ of $X$ in $\mathcal{M}_g$ and a real analytic mapping $q^{-1}:U\rightarrow\mathcal{R}^{6g-g}$ and derive the Fenchel-Nielsen coordinates $\omega_X$ for $\widetilde{X}=q^{-1}(X)$ that
		\begin{equation}
			\notag  \omega_X=(\ell_{\gamma_1},\dots,\ell_{\gamma_{3g-3}},\tau_1,\dots,\tau_{3g-3}),
		\end{equation}
		where $\tau_k$ is the twisting parameter on $\gamma_k$ of $X$.
		
		For $X_n\in \mathcal{N}_g^L\cap U$, there exists $\widetilde{X_n}\in\mathcal{T}_G$ such that $q(\widetilde{X_n})=X_n$, corresponding the Fenchel-Nielsen parameter $\omega_{X_n}$.
		
		Given any hyperbolic surface $Y\in\mathcal{M}_g^{<L}$ and a closed geodesic $\gamma$ with length $< L$ on $Y$, we define
		\begin{equation}
			\notag  w_{\gamma}=\sup\{r>0|\mathcal{C}(\gamma,r)\text{ is a collar}\},
		\end{equation}
		and $w:\mathcal{M}_g\rightarrow\mathbb{R}_+$ that
		\begin{equation}
			\notag w(Y)=\min\{\min_{\ell(\gamma)<L}w_{\gamma},\min_{\ell(\gamma_i),\ell(\gamma_j)<L}\frac{1}{2}\operatorname{dist}(\gamma_i,\gamma_j)\}.
		\end{equation}
		
		By definition, we have
		\begin{equation}
			\notag  \mathcal{N}_g^L=\{X\in\mathcal{M}_g|w(X)\geqslant\frac{1}{4}L\}.
		\end{equation}
		According to the definitions of the mapping in \cite[Lemma 3.2.5]{buser2010geometry}, we can see that there exists a $k_n$-quasi-isometry $\sigma[X_n,X]$ between $X_n$ and $X$. Moreover, $\sigma[X_n,X]$ preserves the $3g-3$ marked closed geodesics. Additionally, for any geodesic $\gamma$ on $X$ and $\gamma^{(n)}=\sigma[X_n,X]^{-1}(\gamma)$ we have
		\begin{equation}
			\notag  \mathcal{C}(\gamma,\frac{1}{k_n}w(X_n))\subseteq\sigma[X_n,X]\left(\mathcal{C}(\gamma^{(n)},w(X_n))\right)\subseteq\mathcal{C}(\gamma,k_nw(X_n))).
		\end{equation}
		
		Since $\mathcal{C}(\gamma^{(n)},w(X_n))$ is an embedding in $X_n$ and $\sigma[X_n,X]$ is a homeomorphism, it follows that $\mathcal{C}(\gamma,\frac{1}{k_n}w(X_n))$ is a collar in $X$ which means $w_\gamma\geqslant \frac{1}{k_n}w(X_n)$.
		By \cite[Theorem 6.4.2]{buser2010geometry}, for a sequence of surfaces $X_n\rightarrow X$, the Fenchel-Nielsen parameter $\omega_{X_n}\rightarrow \omega_X$. Also \cite[Lemma 3.2.6]{buser2010geometry} and \cite[Lemma 3.3.8]{buser2010geometry} demonstrate that $k_n\rightarrow 1$.
		
		Hence we can see that
		\begin{align}
			\notag w(X)&=\min\{\min_{1<i<m}w_{\gamma_i},\min_{1\leqslant i<j\leqslant m}\frac{1}{2}\operatorname{dist}(\gamma_i,\gamma_j)\}\\
			\notag&\geqslant\min\{\min_{1<i<m}\frac{1}{k_n}w(X_n),\min_{1\leqslant i<j\leqslant m}\frac{1}{2k_n}\operatorname{dist}(\gamma_i,\gamma_j)\}\\
			\notag&\geqslant \frac{1}{k_n}w(X_n).
		\end{align}
		
		Now let $n\rightarrow \infty$, then $k_n\rightarrow 1$, and $w(X_n)\geqslant \frac{1}{4}L$ for all $n$, we get $w(X)\geqslant \frac{1}{4}L$ yielding that $X\in\mathcal{N}_g^{L}$. Therefore, we conclude that $\mathcal{N}_g^L$ is a closed subset within $\mathcal{M}_{g}^{<L}$. Using the same method, we can also see that $\mathcal{N}_{g,n}^L(\mathbf{L})$ is a closed set.
	\end{proof}

	\section{\noindent{\bf Mirzakhani's integral formula on $\mathcal{N}_{g,n}^L(\mathbf{L})$}}
	\label{S4}
	For a hyperbolic surface $X\in\mathcal{M}_{g,n}(\mathbf{L})$ and a simple closed curve $\alpha$ on $X$, we denote $[\alpha]$ by the homotopy class of $\alpha$ and $\ell_{\alpha}(X)$ by the length of the geodesic in $[\alpha]$.
	Suppose $\gamma_i$ are simple primitive closed curves, $\gamma=\sum\limits_{i=1}^{k}c_i\gamma_i$ is a multi-curve, and we denote the length of $\gamma$ as
	\begin{equation}
		\notag	\ell_{\gamma}(X)=\sum_{i=1}^{k}c_i\ell_{\gamma_i}(X).
	\end{equation}
	Let $f:\mathbb{R}_+\rightarrow\mathbb{R}_+$ be a continuous function, we define a function of multi-curves	 $f_{\gamma}:\mathcal{M}_{g,n}(\mathbf{L})\rightarrow\mathbb{R}_+$ that
	\begin{equation}
		\notag	f_{\gamma}(X):=\sum_{[\alpha]\in Mod\cdot [\gamma]}f\left(\ell_{\alpha}(X)\right).
	\end{equation}
	Let $S_{g,n}(\gamma)$ denote the surface $S_{g,n}$ cutting along the $\gamma$.
	
	In this section, we prove Theorem \ref{Thm1.2}. First we restate it as following.
	\begin{thm}
		\label{Thm4.1}
		For any multi-curve $\gamma=\sum\limits_{i=1}^{k}c_i\gamma_i$, $\ell_{\gamma_i}(X)< L$, the integral of $f_\gamma$ over $\mathcal{N}_{g,n}^L(\mathbf{L})$ with respect to the Weil-Petersson volume form is given by
		\begin{equation}
			\label{T2} \int_{\mathcal{N}_{g,n}^L(\mathbf{L})}f_{\gamma}(X)dX=\frac{2^{-M(\gamma)}}{|Sym(\gamma)|}\int_{x\in\mathbb{R}^k_+}f(|x|)V_{g,n}^L(\Gamma,x,\beta,\mathbf{L})x\cdot dx,
		\end{equation}
		where $\Gamma=(\gamma_1,\dots,\gamma_k)$, $|x|=\sum\limits_{i=1}^{k}c_ix_i$, $x\cdot dx=x_1\dots x_k\cdot dx_1\wedge\dots\wedge dx_k$, $\mathbf{L}=(L_1,\dots,L_n),\, L_i< L$ for $i=1,\dots,n$ and
		\begin{equation}
			\notag M(\gamma):=|\{i|\gamma_i \text{ separates off a one-handle from } S_{g,n}\}|
		\end{equation}
		and $V_{g,n}^L(\Gamma,x,\beta,\mathbf{L})$ is defined by
		\begin{equation}
			\notag V_{g,n}^L(\Gamma,x,\beta,\mathbf{L}):=\prod_{i=1}^{s}\operatorname{Vol}\left(\mathcal{N}_{g_i,n_i}^L(\ell_{A_i})\right),
		\end{equation}
		where
		\begin{equation}
			\notag S_{g,n}(\gamma)=\bigcup_{i=1}^{s}S_i,
		\end{equation}
		$S_i\simeq S_{g_i,n_i}$, and $A_i=\partial S_i$.
	\end{thm}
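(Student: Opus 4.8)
The plan is to retrace Mirzakhani's derivation of her integration formula (\cite{mirzakhani2007simple}, and its use in \cite{mirzakhani2013growth}) and to verify that each step remains valid after restricting the domain of integration to $\mathcal{N}_{g,n}^L(\mathbf{L})$. Because the previous theorem guarantees that $\mathcal{N}_{g,n}^L(\mathbf{L})$ is Borel measurable, every integral below is well defined. I would first pass to the intermediate cover $\pi\colon \mathcal{M}_{g,n}^{\gamma}(\mathbf{L})\to \mathcal{M}_{g,n}(\mathbf{L})$ associated with the stabilizer of the isotopy class of $\gamma$ in the mapping class group; on this cover the geodesic length $\ell_{\gamma}$ is a well-defined function and the covering unfolds the mapping-class-group sum defining $f_{\gamma}$, giving
\begin{equation}
	\notag \int_{\mathcal{N}_{g,n}^L(\mathbf{L})}f_{\gamma}(X)\,dX=\int_{\pi^{-1}(\mathcal{N}_{g,n}^L(\mathbf{L}))}f\big(\ell_{\gamma}(\widetilde{X})\big)\,d\widetilde{X}.
\end{equation}

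Next I would describe $\mathcal{M}_{g,n}^{\gamma}(\mathbf{L})$ as a twisted product over the moduli space of the cut surface. Cutting a surface along the geodesic representatives of $\gamma_1,\dots,\gamma_k$ records a point of $\prod_{i=1}^{s}\mathcal{M}_{g_i,n_i}(\ell_{A_i})$ together with the lengths $x=(x_1,\dots,x_k)$ of the $\gamma_j$ and twist parameters $\tau_1,\dots,\tau_k$. By Wolpert's formula the Weil-Petersson form splits as $\bigwedge_j dx_j\wedge d\tau_j$ wedged with the Weil-Petersson forms of the pieces; integrating each $\tau_j$ over its period $[0,x_j)$ produces the factor $x_1\cdots x_k$, which together with the length differentials yields $x\cdot dx$. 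The combinatorial constants $2^{-M(\gamma)}$ and $|Sym(\gamma)|^{-1}$ arise exactly as in Mirzakhani's count, from the elliptic involutions of the one-handles separated off by the $\gamma_j$ and from the automorphisms permuting equal weighted components of $\gamma$; since these symmetries act by isometries they preserve the defining conditions of $\mathcal{N}^L$, so the constants may be imported verbatim.

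The heart of the argument, and the step I expect to be the main obstacle, is to show that the restriction $\mathcal{N}^L$ is compatible with this product decomposition, namely
\begin{equation}
	\notag \widetilde{X}\in \pi^{-1}\big(\mathcal{N}_{g,n}^L(\mathbf{L})\big)\iff S_i(\widetilde{X})\in \mathcal{N}_{g_i,n_i}^L(\ell_{A_i})\ \text{for every } i,
\end{equation}
and that this condition is invariant under the twist flow, so that the twist integration factors out cleanly. I would argue this through the collar geometry recalled in Section \ref{S3}. Each weighted component $\gamma_j$ becomes a boundary geodesic of the pieces; its collar $\mathcal{C}(\gamma_j,\tfrac14 L)$ in $\widetilde{X}$ is symmetric about $\gamma_j$ and on cutting splits into two half-collars of width $\geqslant \tfrac14 L$, one on each side, while conversely two such half-collars reglue to a full collar because the collar width depends only on $\ell_{\gamma_j}$ and not on the twist. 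Any short geodesic of $\widetilde{X}$ other than the $\gamma_j$ is simple and disjoint from all the $\gamma_j$, hence contained in the interior of a single piece, so its collar is unaffected by the cutting; and disjointness of all the collars and half-collars of $\widetilde{X}$ is equivalent to disjointness within each piece. The decisive point for the twist-integration is that twisting along $\gamma_j$ alters neither the hyperbolic structures of the pieces nor the width of any collar or half-collar, so every inequality defining $\mathcal{N}^L$ is constant along twist orbits. Once this equivalence and twist-invariance are in place, the fiber integral over the pieces is precisely $\prod_{i=1}^{s}\operatorname{Vol}\big(\mathcal{N}_{g_i,n_i}^L(\ell_{A_i})\big)=V_{g,n}^L(\Gamma,x,\beta,\mathbf{L})$ (which vanishes automatically when some $x_j\geqslant L$, consistent with the hypothesis $\ell_{\gamma_i}(X)<L$), and assembling the factors gives the stated formula.
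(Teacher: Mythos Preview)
Your proposal is correct and follows essentially the same approach as the paper: both unfold to the intermediate cover $\mathcal{M}_{g,n}(\mathbf{L})^{\Gamma}$, invoke the Wolpert/Fenchel--Nielsen product decomposition, verify that the $\mathcal{N}^L$ condition is invariant under the twist flow along the $\gamma_j$ (because twisting preserves each collar $\mathcal{C}(\gamma_j,\tfrac14 L)$), and check that cutting along $\gamma$ carries $\mathcal{N}_{g,n}^L(\mathbf{L})$ to $\prod_i \mathcal{N}_{g_i,n_i}^L(\ell_{A_i})$. If anything, you are slightly more explicit than the paper about the reverse implication of the key equivalence (that regluing pieces in $\mathcal{N}^L$ yields a surface in $\mathcal{N}^L$), which is needed for the formula to be an equality rather than an inequality.
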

	
	To prove Theorem \ref{Thm4.1}, we recall the Mirzakani's integral formula on $\mathcal{M}_{g,n}(\mathbf{L})$.
	
	\begin{lemma}\cite[Theorem 7.1]{mirzakhani2007simple}
		\label{Lem4.2}
		For any multi-curve $\gamma=\sum_{i=1}^{k}c_i\gamma_i$, the integral of $f_{\gamma}$ over $\mathcal{M}_{g,n}(\mathbf{L})$ with respect to the Weil-Petersson volume form is given by
		\begin{equation}
			\notag\int_{\mathcal{M}_{g,n}(\mathbf{L})}f_{\gamma}(X)dX=\frac{2^{-M(\gamma)}}{|Sym(\gamma)|}\int_{x\in\mathbb{R}^k_+}f(|x|)V_{g,n}(\Gamma,x,\beta,\mathbf{L})x\cdot dx,
		\end{equation}
		where $\Gamma=(\gamma_1,\dots,\gamma_k)$, $|x|=\sum_{i=1}^{k}c_ix_i$, $x\cdot dx=x_1\dots x_k\cdot dx_1\wedge\dots\wedge dx_k$ and
		\begin{equation}
			\notag	M(\gamma):=|\{i|\gamma_i \text{ separates off a one-handle from } S_{g,n}\}|,
		\end{equation}
		and $V_{g,n}(\Gamma,x,\beta,\mathbf{L})$ is defined by
		\begin{equation}
			\notag V_{g,n}(\Gamma,x,\beta,\mathbf{L}):=\operatorname{Vol}\left(\mathcal{M}\left(S_{g,n}(\gamma),\ell_{\Gamma}=x,\ell_{\beta}=\mathbf{L}\right)\right)=\prod_{i=1}^{s}V_{g_i,n_i}(\ell_{A_i}).
		\end{equation}
	\end{lemma}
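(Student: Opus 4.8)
The plan is to prove this by the classical \emph{unfolding} argument over an intermediate covering space of the moduli space, which is exactly Mirzakhani's method in \cite{mirzakhani2007simple}. Write $S = S_{g,n}$ and fix the multi-curve $\gamma=\sum_{i=1}^k c_i\gamma_i$ with the $\gamma_i$ disjoint and pairwise non-homotopic. Let $\mathrm{Stab}(\gamma)\subseteq \mathrm{Mod}_{g,n}$ be the subgroup stabilizing the isotopy class of $\gamma$ (allowing those mapping classes that permute the components while preserving the weights $c_i$), and form the intermediate cover
\[
\mathcal{M}_{g,n}^{\gamma}(\mathbf{L}) := \mathcal{T}_{g,n}(\mathbf{L})/\mathrm{Stab}(\gamma),
\]
with covering projection $\pi:\mathcal{M}_{g,n}^{\gamma}(\mathbf{L})\to\mathcal{M}_{g,n}(\mathbf{L})$. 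Since the mapping-class-group orbit $\mathrm{Mod}_{g,n}\cdot[\gamma]$ is in bijection with the coset space $\mathrm{Mod}_{g,n}/\mathrm{Stab}(\gamma)$, and the length functional $\ell_\gamma$ descends to a single well-defined function on the cover, summing $f$ over the orbit downstairs and integrating is the same as integrating the single term $f(\ell_\gamma)$ upstairs. Because $\pi$ is a local isometry for the Weil--Petersson metric, this yields the unfolding identity
\[
\int_{\mathcal{M}_{g,n}(\mathbf{L})} f_\gamma(X)\, dX = \int_{\mathcal{M}_{g,n}^{\gamma}(\mathbf{L})} f\big(\ell_\gamma(Y)\big)\, dY .
\]

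Next I would analyze the geometry of the cover. Cutting $S$ along $\gamma$ produces the pieces $S_{g,n}(\gamma)=\bigsqcup_{i=1}^s S_i$ with $S_i\simeq S_{g_i,n_i}$ and $A_i=\partial S_i$. Using Fenchel--Nielsen coordinates adapted to $\gamma$, the cover $\mathcal{M}_{g,n}^{\gamma}(\mathbf{L})$ fibers over the length space $x=(x_1,\dots,x_k)=(\ell_{\gamma_1},\dots,\ell_{\gamma_k})\in\mathbb{R}^k_+$: over each $x$ the fiber is a torus bundle, coming from the twist parameters $\tau_1,\dots,\tau_k$ along the $\gamma_i$, over the product of moduli spaces $\prod_{i=1}^s\mathcal{M}(S_{g_i,n_i},\ell_{A_i})$ of the cut pieces, whose boundary lengths are determined by $x$ and $\mathbf{L}$. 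The essential analytic input is Wolpert's formula for the Weil--Petersson symplectic form in these coordinates, $\omega_{\mathrm{WP}}=\sum_{j} d\ell_{\gamma_j}\wedge d\tau_j$, so that the volume form factors as $\big(\prod_j d\ell_{\gamma_j}\, d\tau_j\big)$ times the product of the Weil--Petersson volume forms on the pieces.

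I would then integrate out the twist parameters. Because the Dehn twist $T_{\gamma_i}$ lies in $\mathrm{Stab}(\gamma)$, the twist $\tau_i$ is well-defined only modulo $\ell_{\gamma_i}=x_i$, so each twist circle contributes a factor $x_i$; collecting these gives the factor $x_1\cdots x_k$, which combines with the differentials $dx_1\wedge\cdots\wedge dx_k$ to produce exactly the measure $x\cdot dx$ of the statement. The remaining fiber integral over $\prod_{i=1}^s\mathcal{M}(S_{g_i,n_i},\ell_{A_i})$ is by definition $V_{g,n}(\Gamma,x,\beta,\mathbf{L})=\operatorname{Vol}\big(\mathcal{M}(S_{g,n}(\gamma),\ell_\Gamma=x,\ell_\beta=\mathbf{L})\big)=\prod_{i=1}^s V_{g_i,n_i}(\ell_{A_i})$. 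Substituting $f(\ell_\gamma)=f(|x|)$ with $|x|=\sum_i c_i x_i$ then gives the claimed formula, up to the symmetry prefactor.

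The most delicate step, and the main obstacle, is the correct bookkeeping of the prefactor $2^{-M(\gamma)}/|\mathrm{Sym}(\gamma)|$, which records that $\mathrm{Stab}(\gamma)$ is strictly larger than the lattice generated by the Dehn twists. Two phenomena contribute: (i) when a component $\gamma_i$ bounds a one-handle, i.e. cuts off a copy of $S_{1,1}$, the stabilizer contains an extra order-two element (the elliptic involution of that handle, equivalently an identification of the twist parameter under a half-rotation) that acts on the fiber, giving the factor $2^{-M(\gamma)}$ with $M(\gamma)$ counting such components; and (ii) the group $\mathrm{Sym}(\gamma)$ of permutations of the $\gamma_i$ realized by mapping classes preserving the weights acts on the fiber and must be quotiented out, contributing $1/|\mathrm{Sym}(\gamma)|$. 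Verifying that these are precisely the orbifold symmetries of the covering, and that no further identifications occur, requires tracking the short exact sequence relating $\mathrm{Stab}(\gamma)$, the twist lattice, and the mapping class groups of the cut pieces; this is the technical heart of \cite[Theorem 7.1]{mirzakhani2007simple}, which I would invoke for the exact constants rather than re-derive in full.
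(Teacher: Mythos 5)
Your unfolding argument---passing to the intermediate cover $\mathcal{T}_{g,n}(\mathbf{L})/\mathrm{Stab}(\gamma)$, converting the orbit sum into a single integral upstairs, applying Wolpert's formula $\omega_{\mathrm{WP}}=\sum_j d\ell_{\gamma_j}\wedge d\tau_j$, integrating out the twist circles to produce the measure $x\cdot dx$, and attributing the prefactor $2^{-M(\gamma)}/|\mathrm{Sym}(\gamma)|$ to the elliptic involutions of one-handles and to component permutations in $\mathrm{Stab}(\gamma)$---is exactly the standard proof of this result, which the paper itself does not reprove but quotes directly as \cite[Theorem 7.1]{mirzakhani2007simple}, and which Sect.~\ref{S4} then adapts by the same covering scheme to $\mathcal{N}_{g,n}^L(\mathbf{L})$ in proving Theorem \ref{Thm4.1}. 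The only caveat is that you defer the exact constant bookkeeping to the cited theorem, which would be circular in a fully self-contained proof, but since the statement is itself an imported result this is harmless here.
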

	
	Now we sketch how we replace $\mathcal{M}_{g,n}(\mathbf{L})$ by $\mathcal{N}_{g,n}^L(\mathbf{L})$ in Lemma \ref{Lem4.2}. For an element $h\in Mod_{g,n}$, it acts on $\Gamma$ by
	\begin{equation}
		\notag h\cdot\Gamma=(h\cdot\gamma_1,\dots,h\cdot\gamma_k).
	\end{equation}
	Let $\mathcal{O}_{\Gamma}$ be the set of homotopy classes of elements of $Mod\cdot \Gamma$, and $\mathcal{M}_{g,n}(\mathbf{L})^{\Gamma}$ be the set
	\begin{equation}
		\notag \mathcal{M}_{g,n}(\mathbf{L})^{\Gamma}=\{(X,\eta)|X\in\mathcal{M}_{g,n}(\mathbf{L})^{\Gamma},\eta=(\eta_1,\dots,\eta_k)\in\mathcal{O}_{\Gamma}\}.
	\end{equation}
	Then
	\begin{equation}
		\label{Ngamma} \mathcal{M}_{g,n}(\mathbf{L})^{\Gamma}=\mathcal{T}_{g,n}(\mathbf{L})/G_{\Gamma},
	\end{equation}
	where
	\begin{equation}
		\notag G_{\Gamma}=\bigcap_{i=1}^{k}Stab(\gamma_i)\subseteq Mod(S_{g,n}).
	\end{equation}
	
	Now we define the covering map $\pi^{\Gamma}:\mathcal{M}_{g,n}(\mathbf{L})^{\Gamma}\rightarrow \mathcal{M}_{g,n}(\mathbf{L})$
	\begin{equation}
		\notag \pi^{\Gamma}(X,\eta) :=X.
	\end{equation}
	For $X\in\mathcal{N}_{g,n}^L(\mathbf{L})\subseteq\mathcal{M}_{g,n}(\mathbf{L})$ and $\Gamma=(\gamma_1,\dots,\gamma_k)$, we define
	\begin{equation}
		\notag (\mathcal{N}_{g,n}^L(\mathbf{L}))^{\Gamma}:=(\pi^{\Gamma})^{-1}\mathcal{N}_{g,n}^L(\mathbf{L})
	\end{equation}
	as a subset of $\mathcal{M}_{g,n}(\mathbf{L})^{\Gamma}$. Due to (\ref{Ngamma}), there is a covering map between $\mathcal{T}_{g,n}(\mathbf{L})$ and $\mathcal{M}_{g,n}(\mathbf{L})^{\Gamma}$, we define a subset of $\mathcal{T}_{g,n}(\mathbf{L})$ as $\mathcal{T}_{g,n}^L(\mathbf{L})$ such that there exists a covering map between $\mathcal{T}_{g,n}^L(\mathbf{L})$ and $(\mathcal{N}_{g,n}^L(\mathbf{L}))^{\Gamma}$ satisfying
	\begin{equation}
		\notag (\mathcal{N}_{g,n}^L(\mathbf{L}))^{\Gamma}=\mathcal{T}_{g,n}^L(\mathbf{L})/G_{\Gamma}\subseteq\mathcal{T}_{g,n}(\mathbf{L})/G_{\Gamma}.
	\end{equation}
	Next we consider twisting on $\Gamma$. Let $\ell_{\Gamma}:\mathcal{T}_{g,n}(\mathbf{L})\rightarrow \mathbb{R}_+^k$
	\begin{equation}
		\notag \ell_{\Gamma}(X)=(\ell_{\gamma_1}(X),\dots,\ell_{\gamma_k}(X)).
	\end{equation}
	We restrict $\ell_{\Gamma}$ to the subset $\mathcal{T}_{g,n}^L(\mathbf{L})$ as $\ell^{\Gamma}|_{\mathcal{T}_{g,n}^L(\mathbf{L})}$.
	
	Let $a=(a_1,\dots,a_k)\in\mathbb{R}_+^k$ be a $k$-type such that $a_i< L$ for all $i=1,\dots,k$.
	The Weil-Petersson volume form induces a natural measure on the preimage under the essential projection map $(\ell^{\Gamma}|_{\mathcal{T}_{g,n}^L(\mathbf{L})})^{-1}(a)\subseteq \mathcal{T}_{g,n}^L(\mathbf{L})\subseteq\mathcal{T}_{g,n}(\mathbf{L})$. By the assumption of $\mathcal{N}_{g,n}^L(\mathbf{L})$, each $\gamma_i$ has a collar with width $\geqslant\frac{1}{4}L$, which implies that $\gamma_i\cap\mathcal{C}(\gamma_j,\frac{1}{4}L)=\varnothing$ for $i\neq j$.
	
	Let $\phi_{\gamma_i}^{t}:\mathcal{T}_{g,n}^L(\mathbf{L})\rightarrow\mathcal{T}_{g,n}^L(\mathbf{L})$ be twisting along $\gamma_i$ with length $t$. Then we have
	\begin{equation}
		\notag \phi_{\gamma_i}^{t}(\mathcal{C}(\gamma_j,\frac{1}{4}L))=\mathcal{C}(\gamma_j,\frac{1}{4}L)
	\end{equation}
	for all $i,j=1,\dots k$.
	This implies the twisting map
	\begin{equation}
		\notag \phi_{\gamma}^{(t_1,\dots,t_k)}:=\prod_{i=1}^{k}\phi_{\gamma_i}^{t_i}
	\end{equation}
	is a bijection from $\mathcal{T}_{g,n}^L(\mathbf{L})$ to $\mathcal{T}_{g,n}^L(\mathbf{L})$, which preserves the Weil Petersson symplectic form. It means that for fixed $\mathbf{L}$, the volume of  $\mathcal{N}_{g,n}^L(\mathbf{L})$ depends only on $\ell_{\Gamma}(X)$. This is precisely why we construct the subset $\mathcal{N}_{g,n}^L(\mathbf{L})$ by controlling the widths of the collars around short geodesics.
	From the above discussion, the length function $\ell_{\Gamma}|_{\mathcal{T}_{g,n}^L(\mathbf{L})}$ descends to a function $\mathcal{L}_{\Gamma}|_{(\mathcal{N}_{g,n}^L(\mathbf{L}))^{\Gamma}}$ on $(\mathcal{N}_{g,n}^L(\mathbf{L}))^{\Gamma}$ as
	\begin{equation}
		\notag \mathcal{L}_{\Gamma}|_{(\mathcal{N}_{g,n}^L(\mathbf{L}))^{\Gamma}}\left(X,(\eta_1,\dots,\eta_k)\right)=(\ell_{\eta_1}(X),\dots,\ell_{\eta_k}(X)).
	\end{equation}
	Then the map $\pi=\pi^{\Gamma}\circ\mathcal{L}_{\Gamma}^{-1}:\mathbb{R}_+^k\rightarrow\mathcal{M}_{g,n}(\mathbf{L})$ restricts to a covering map $\pi|_{U}:U\rightarrow\mathcal{N}_{g,n}^L(\mathbf{L})$, where $U=\ell_{\Gamma}(\mathcal{T}_{g,n}^L(\mathbf{L}))\subset\mathbb{R}_+^k$.\\
	
	Now we begin to prove Theorem \ref{Thm4.1} and first recall
	\begin{lemma}\cite[Lemma 7.3]{mirzakhani2007simple}
		For any function $F:\mathbb{R}_+^k\rightarrow\mathbb{R}_+$ and $\Gamma=(\gamma_1,\dots,\gamma_k)$, define $F_{\Gamma}:\mathcal{M}_{g,n}(\mathbf{L})^{\Gamma}\rightarrow\mathbb{R}$ by
		\begin{equation}
			\notag  F_{\Gamma}(Y)=F(\mathcal{L}_{\Gamma}(Y)).
		\end{equation}
		Then the integral of $F_{\Gamma}$ over $\mathcal{M}_{g,n}(\mathbf{L}^{\Gamma})$ is given by
		\begin{equation}
			\notag  \int_{\mathcal{M}_{g,n}(\mathbf{L})^{\Gamma}}F_{\Gamma}(Y)dY=2^{-M(\Gamma)}\int_{x\in\mathbb{R}^+_k}F(x)\operatorname{Vol}(\mathcal{M}(S_{g,n}(\gamma),\ell_{\beta}=\mathbf{L},\ell_{\Gamma}=x))xdx,
		\end{equation}
		where $x=(x_1,\dots,x_k)$, and $xdx=x_1\dots x_kdx_1\wedge\dots\wedge x_k$.
	\end{lemma}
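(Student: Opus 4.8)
The plan is to realize the intermediate cover $\mathcal{M}_{g,n}(\mathbf{L})^{\Gamma}=\mathcal{T}_{g,n}(\mathbf{L})/G_{\Gamma}$ through a Fenchel--Nielsen parametrization adapted to the ordered multicurve $\Gamma=(\gamma_1,\dots,\gamma_k)$, and then to evaluate $\int F_{\Gamma}\,dY$ by integrating along the level sets of $\mathcal{L}_{\Gamma}$ via the coarea formula. Concretely, I would first extend $\Gamma$ to a pants decomposition $\{\gamma_1,\dots,\gamma_{3g-3+n}\}$ of $S_{g,n}$ and use the associated coordinates $(\ell_{\gamma_j},\tau_{\gamma_j})$ on $\mathcal{T}_{g,n}(\mathbf{L})$. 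Cutting along $\Gamma$ exhibits $\mathcal{T}_{g,n}(\mathbf{L})$ as assembled from the gluing data $(\ell_{\gamma_i},\tau_i)_{i=1}^{k}$ of the curves $\gamma_i$ together with the Teichm\"uller space $\mathcal{T}(S_{g,n}(\gamma))$ of the cut surface $S_{g,n}(\gamma)=\bigsqcup_i S_i$, with boundary lengths $\ell_{\Gamma}=x$ and $\ell_{\beta}=\mathbf{L}$; this gluing map is a diffeomorphism onto $\mathcal{T}_{g,n}(\mathbf{L})$.

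The essential input is Wolpert's formula, which in these coordinates gives the Weil--Petersson symplectic form as $\omega_{WP}=\sum_{j} d\ell_{\gamma_j}\wedge d\tau_{\gamma_j}$; in particular it splits as the twist--length form of the cutting curves plus the Weil--Petersson form of the cut surface, so the volume form factors as $dY=\big(\prod_{i=1}^{k} d\ell_{\gamma_i}\,d\tau_i\big)\wedge d\operatorname{Vol}_{WP}(S_{g,n}(\gamma))$. Since $F_{\Gamma}(Y)=F(\mathcal{L}_{\Gamma}(Y))$ depends only on $x=(\ell_{\gamma_1},\dots,\ell_{\gamma_k})$, the integral over each fiber $\mathcal{L}_{\Gamma}^{-1}(x)$ separates: the factor $F(x)$ pulls out, leaving a twist integral multiplied by the Weil--Petersson volume of $\mathcal{M}(S_{g,n}(\gamma),\ell_{\Gamma}=x,\ell_{\beta}=\mathbf{L})$.

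It then remains to determine the range of the twist parameters $\tau_i$ modulo $G_{\Gamma}$. Because the Dehn twist $T_{\gamma_i}$ lies in $G_{\Gamma}=\bigcap_i\mathrm{Stab}(\gamma_i)$, the twist flow $\phi_{\gamma_i}^{t}$ returns to the same point of $\mathcal{M}_{g,n}(\mathbf{L})^{\Gamma}$ after $t=\ell_{\gamma_i}=x_i$, so generically $\tau_i$ sweeps a circle of circumference $x_i$ and the twist torus has volume $\prod_i x_i$. The exception occurs when $\gamma_i$ separates off a one-handle $S_{1,1}$: the elliptic involution of $S_{1,1}$ fixes its boundary and, composed with the half twist $\phi_{\gamma_i}^{x_i/2}$, returns to the same marked point, so the effective period of $\tau_i$ drops to $x_i/2$. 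Collecting these contributions gives a twist-torus volume of $2^{-M(\Gamma)}\prod_{i=1}^{k}x_i$, where $M(\Gamma)$ counts the one-handle curves, and Fubini assembles the fibered integral into $\int_{x\in\mathbb{R}_+^{k}}F(x)\,\operatorname{Vol}(\mathcal{M}(S_{g,n}(\gamma),\ell_{\beta}=\mathbf{L},\ell_{\Gamma}=x))\,2^{-M(\Gamma)}x_1\cdots x_k\,dx$, as claimed.

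The main obstacle is the careful bookkeeping of the $G_{\Gamma}$-action on the fibers, and in particular the verification that the only identifications beyond the Dehn twists are these one-handle involutions. I would argue that $G_{\Gamma}$ is generated by the twists $T_{\gamma_i}$, the mapping classes supported on the complementary pieces $S_i$ (which effect the quotient $\mathcal{T}(S_{g,n}(\gamma))\to\mathcal{M}(S_{g,n}(\gamma))$ whose volume appears in the integrand), and the finitely many classes interchanging the two sides of each $\gamma_i$; a case analysis of the topological types of the components adjacent to $\gamma_i$ then shows that the latter act nontrivially on the twist coordinate precisely when $\gamma_i$ bounds a one-holed torus, producing the factor $2^{-M(\Gamma)}$ and no further reduction.
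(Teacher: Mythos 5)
This lemma is quoted in the paper directly from Mirzakhani \cite[Lemma 7.3]{mirzakhani2007simple} without proof, and your argument is a correct reconstruction of the standard proof from that source: unfolding to the intermediate cover $\mathcal{T}_{g,n}(\mathbf{L})/G_{\Gamma}$, applying Wolpert's formula $\omega_{WP}=\sum_j d\ell_{\gamma_j}\wedge d\tau_{\gamma_j}$ to split the volume form, and computing the twist-torus volume $\prod_i x_i$ with the period of $\tau_i$ halved by the elliptic involution exactly when $\gamma_i$ separates off a one-handle, yielding $2^{-M(\Gamma)}$. Since this matches the cited proof in both strategy and the delicate $G_{\Gamma}$-bookkeeping, nothing further is needed.
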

	Using this lemma, we take $F(x_1,\dots,x_k)=f(\sum\limits_{i=1}^{k}c_ix_i)\cdot1_{\mathcal{N}_{g,n}^L(\Gamma,\ell_{\Gamma},\beta,\mathbf{L})}\circ\pi(x)$, then
	\begin{align}
		\notag	\int_{\left(\mathcal{N}_{g,n}(\mathbf{L})\right)^{\Gamma}}F_{\Gamma}(Y)dY=&\int_{\mathcal{M}_{g,n}(\mathbf{L})^{\Gamma}}F_{\Gamma}(Y)dY\\
		\notag=&2^{-M(\Gamma)}\int_{x\in\mathbb{R}_+^k}f(|x|)\cdot1_{\mathcal{N}_{g,n}(\Gamma,\ell_{\Gamma},\beta,\mathbf{L})}\circ\mathcal{L}^{-1}_{\Gamma}(x)\\
		\notag   &\operatorname{Vol}(\mathcal{M}(S_{g,n}(\gamma),\ell_{\beta}=\mathbf{L},\ell_{\Gamma}=x))xdx.
	\end{align}
	
	If $X\in\mathcal{N}_{g,n}^L(\mathbf{L})$ such that $\ell_{\Gamma}=x$, then  we cut $X$ along $\gamma_i$ for $i=1,\dots,k$ and get
	\begin{equation}
		\notag	X-\bigcup_{i=1}^{k}\gamma_i \simeq \bigcup_{i=1}^{s}S_{g_i,n_i}.
	\end{equation}
	
	We can see for $x_i< L$, $\mathcal{C}(\gamma_i,\frac{1}{4}L)\cap\mathcal{C}(\delta,\frac{1}{4}L)=\varnothing$ for all $i=1,\dots,k$ and $\delta$ a closed geodesic on $X$ with length less than $L$.
	Hence $S_{g_i,n_i}\in \mathcal{N}_{g_i,n_i}^L(\ell_{A_i})$ where $A_i=\partial S_{g_i,n_i}$.
	Thus
	\begin{equation}
		\notag 1_{\mathcal{N}_{g,n}(\Gamma,\ell_{\Gamma},\beta,\mathbf{L})}\circ\mathcal{L}^{-1}_{\Gamma}(x)\operatorname{Vol}(\mathcal{M}(S_{g,n}(\gamma),\ell_{\beta}=\mathbf{L},\ell_{\Gamma}=x))=\prod_{i=1}^{s}\operatorname{Vol}\left(\mathcal{N}_{g_i,n_i}^L(\ell_{A_i})\right).
	\end{equation}
	
	Finally, we consider the map $\pi^{\Gamma}:\left(\mathcal{N}_{g,n}(\mathbf{L})\right)^{\Gamma}\rightarrow\mathcal{N}_{g,n}(\mathbf{L})$, then
	\begin{equation}
		\notag	f_{\gamma}\circ \pi^{\Gamma}= F_{\Gamma}.
	\end{equation}
	Combining with
	\begin{equation}
		\notag\int_{\left(\mathcal{N}_{g,n}(\mathbf{L})\right)^{\Gamma}}F_{\Gamma}(Y)dY=2^{-M(\gamma)}\int_{x\in\mathbb{R}_+^k}f(|x|)\operatorname{Vol}(\mathcal{N}_{g,n}^L(\gamma,x,\ell_{\beta},\mathbf{L}))xdx
	\end{equation}
	and
	\begin{equation}
		\notag \int_{\left(\mathcal{N}_{g,n}(\mathbf{L})\right)^{\Gamma}}F_{\Gamma}(Y)dY=|Sym(\gamma)|\int_{\mathcal{N}_{g,n}^L(\mathbf{L})}f_{\gamma}(X)dX,
	\end{equation}
	we thus complete the Mirzakhani's integral formula on $\mathcal{N}_{g,n}^L(\mathbf{L})$.

	\section{\noindent{{\bf The volume of $\mathcal{N}_{g.n}^{L}(\mathbf{L})$}}}
	\label{S5}
	In this section, we aim to determine the length of the shortest separating closed multi-geodesics on $\mathcal{M}_{g,n}(\mathbf{L})$, where $n=g^{o(1)}$ and $\sum\limits_{i=1}^{n}L_i=y=g^{o(1)}$. Subsequently, we will compare the volume of $\mathcal{N}_{g,n}^L(\mathbf{L})$ with $V_{g,n}(\mathbf{L})$. The main theorem is as following.
	
	\begin{thm}
		\label{Thm5.1}
		Let $n=n(g)$ and $L_i< L=L(g)$ for $i=1,\dots,n$, such that
		\begin{equation}
			\notag \varlimsup\limits_{g\rightarrow\infty}\frac{\log n(g)}{\log g}=0,\, \varlimsup\limits_{g\rightarrow\infty}\frac{\log L(g)}{\log g}=0.
		\end{equation}
		Then we have
		\begin{equation}
			\notag \frac{1}{V_{g,n}(\mathbf{L})}\operatorname{Vol}(E_{g,n}^L(\mathbf{L}))\leqslant c_1\frac{\widetilde{L}^2}{g}e^{\frac{L}{2}}+c_2e^{\frac{L}{2}}\frac{\widetilde{L}^{7}}{g^2}
			+c_3e^{2L}\frac{n^{10}}{g^{9}}+c_4e^{2L}\frac{n^{12}}{g^{10}},
		\end{equation}
		where $\widetilde{L}=\max\{L,n\}$, $c_1,c_2,c_3,c_4>0$ are some positive constants, and $E_{g,n}^L(\mathbf{L})$ is defined as a subset of $\mathcal{M}_{g,n}(\mathbf{L})$  such that
		\begin{equation}
			\notag E_{g,n}^L(\mathbf{L}):=\{X\in\mathcal{M}_{g,n}(\mathbf{L});\mathcal{L}_1(X)\leqslant L\}.
		\end{equation}
		
		In particular, taking $n\leqslant c\log g$ and $L=(2-\epsilon)\log g$ for any $0<\varepsilon<2$ and $c \geqslant 0$, then we can get
		\begin{equation}
			\notag \lim\limits_{g\rightarrow\infty}\frac{1}{V_{g,n}(\mathbf{L})}\operatorname{Vol}(E_{g,n}^{(2-\varepsilon)\log g}(\mathbf{L}))=0.
		\end{equation}
	\end{thm}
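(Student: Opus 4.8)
The plan is to bound $\operatorname{Vol}(E_{g,n}^L(\mathbf{L}))$ by a union bound over the topological types of separating multi-curves and then feed each type into Mirzakhani's integral formula (Lemma \ref{Lem4.2}). Since $\mathcal{L}_1(X)\leqslant L$ means that \emph{some} separating simple closed multi-geodesic has total length at most $L$, I take $f=\mathbf{1}_{[0,L]}$ so that for each mapping class group orbit $[\gamma]$ of a separating multi-curve $\gamma=\gamma_1+\cdots+\gamma_k$ the associated $f_\gamma$ counts the geodesics of that type with length $\leqslant L$; the indicator of $E_{g,n}^L(\mathbf{L})$ is then dominated by $\sum_{[\gamma]}f_\gamma$. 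Applying Lemma \ref{Lem4.2} and the cutting factorization gives
\begin{equation}
	\notag \operatorname{Vol}(E_{g,n}^L(\mathbf{L}))\leqslant\sum_{[\gamma]}\frac{2^{-M(\gamma)}}{|\operatorname{Sym}(\gamma)|}\int_{|x|\leqslant L}\ \prod_{i=1}^{s}V_{g_i,n_i}(\ell_{A_i})\ x\cdot dx,
\end{equation}
where the product runs over the pieces $S_{g_i,n_i}$ obtained by cutting $S_{g,n}$ along $\gamma$. Normalizing each summand by $V_{g,n}(\mathbf{L})$ reduces the problem to estimating volume ratios and a length integral.

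The two estimates I would rely on are: the Mirzakhani--Zograf large-genus asymptotics together with the ratio bounds of Nie, Wu and Xue \cite{nie2023large} to control $\prod_i V_{g_i,n_i}/V_{g,n}$, and a sinh-type bound $V_{g,n}(\ldots,x)\leqslant V_{g,n}\cdot\sinh(x/2)/(x/2)$ for the boundary-length dependence. The crucial refinement is to apply the sinh bound \emph{only} to the single large piece, while for any piece of bounded complexity I would instead insert its \emph{exact} Weil--Petersson volume polynomial, whose degree in the boundary lengths is controlled. Because $\int_0^L x^{2d}e^{x/2}\,dx=O(L^{2d}e^{L/2})$, the low-degree polynomial of the small piece yields only a single factor $e^{L/2}$, whereas crudely bounding \emph{both} sides by sinh would integrate $\sinh^2(x/2)$ and produce the far larger $e^{L}$.

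Next I would organize the sum by the complexity of the separation. The dominant contribution is a single curve ($k=1$) bounding a one-holed torus: the small piece $S_{1,1}$ has a degree-two volume polynomial, the integral is $O(L^2e^{L/2})$, and the ratio is $O(1/g)$, giving $c_1\widetilde{L}^2g^{-1}e^{L/2}$. Separations that cut off a piece of higher genus or absorb some of the $n$ marked boundaries raise both the polynomial degree (hence the power of $\widetilde{L}=\max\{L,n\}$) and the power of $1/g$; after summing over the $\binom{n}{j}$ ways of apportioning boundaries this yields $c_2\widetilde{L}^{7}g^{-2}e^{L/2}$. For multi-curves with $k\geqslant 2$ I would use the cruder bound obtained by relaxing $\mathbf{1}_{|x|\leqslant L}$ to $\prod_i\mathbf{1}_{x_i\leqslant L}$ and applying the sinh estimate on every piece; this factorizes the integral as $\prod_i\int_0^L\sinh^2(x_i/2)\,x_i\,dx_i=O(L^k e^{kL})$, so $k=2$ explains the factor $e^{2L}$, while the combinatorial count of these configurations and their deeper volume ratios supply $n^{10}/g^{9}$ and $n^{12}/g^{10}$.

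The main obstacle I anticipate is the uniform combinatorial control of the entire sum over separating types: one must bound how many topological types occur at each complexity, track the distribution of the $n$ boundaries among the pieces, and sum the genus partition $\sum_{g_1+g_2=g}$ against the ratio bounds, all while keeping the length-integration uniform. The tension between exact low-degree polynomials for small pieces (securing $e^{L/2}$ in the leading terms) and the crude sinh bound for the high-complexity tail (which yields $e^{2L}$ but is suppressed by $g^{-9}$) forces a careful case split. Finally, for $n\leqslant c\log g$ and $L=(2-\varepsilon)\log g$ one has $e^{L/2}=g^{1-\varepsilon/2}$, so the leading term is $O\!\left((\log g)^2 g^{-\varepsilon/2}\right)\to 0$ and every remaining term is $g^{-c'}$ for some $c'>0$; hence $\operatorname{Vol}(E_{g,n}^{(2-\varepsilon)\log g}(\mathbf{L}))/V_{g,n}(\mathbf{L})\to 0$.
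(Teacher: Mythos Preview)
Your overall strategy—union bound over topological types of separating multi-curves, Mirzakhani's integral formula, exact small-piece polynomial versus sinh bound on the large piece to secure $e^{L/2}$ in the leading terms—matches the paper. The gap is in your treatment of the tail.

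You propose, for $k\geqslant 2$, to relax $\mathbf{1}_{|x|\leqslant L}$ to $\prod_i\mathbf{1}_{x_i\leqslant L}$ and factorize, obtaining $O(e^{kL})$, and then say ``$k=2$ explains the factor $e^{2L}$.'' But $k$ is not bounded by $2$: a separating multi-curve can have up to order $g$ components, and your factorized bound gives $e^{kL}$ for each. The volume-ratio suppression is $O(g^{-m})$ with $m=2g_0-2+k+n_0$, and since $k\leqslant m+2$ the best your bound yields is $e^{(m+2)L}/g^{m}$; for $L=(2-\varepsilon)\log g$ this is $g^{m(1-\varepsilon)+2(2-\varepsilon)}$, which \emph{diverges} as $m\to\infty$ whenever $\varepsilon<1$. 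Relaxing the simplex constraint destroys the argument.

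The paper instead \emph{keeps} $|x|\leqslant L$ throughout and uses $\prod_i\bigl(\sinh(x_i/2)/(x_i/2)\bigr)^2\leqslant e^{|x|}\leqslant e^{L}$ uniformly in $k$; the remaining integral $\int_{|x|\leqslant L}\prod_i x_i\,dx=L^{2k}/(2k)!$ then sums over $k$ to at most $\cosh L\leqslant e^{L}$, and $e^{L}\cdot e^{L}=e^{2L}$ is the true source of that factor—not $k=2$. Relatedly, the paper splits cases by $m=|\chi(\text{small piece})|$ (namely $m=1$, $2\leqslant m\leqslant 10$, $m\geqslant 11$) rather than by $k$, because both the small-piece polynomial degree and the ratio suppression $g^{-m}$ are governed by $m$. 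Your middle term $\widetilde{L}^{7}g^{-2}e^{L/2}$ actually comes from $m=2$, which already includes configurations with $k$ up to $4$, so attributing it to single-curve separations is also off.
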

	
	\begin{proof}
		The proof of this theorem adopts a similar approach presented in \cite{nie2023large}. Specifically, we apply Mirzakhani's integral formula to the moduli space $\mathcal{M}_{g,n}(\mathbf{L})$. Assume $\mathcal{L}_1(X)\leqslant L$ and $\gamma=\gamma_1+\dots+\gamma_k$ satisfies $\ell_{\gamma}(X)=\mathcal{L}_1(X)$, where $\gamma_1,\dots,\gamma_k$ are simple closed geodesics partitioning $X$ into $S_{g_0,k+n_0}\cup S_{g-g_0-k+1,k+n-n_0}$ for some $(g_0,k+n_0)$ with $|\chi(S_{g_0,k+n_0})|\leqslant\frac{1}{2}|\chi(S_{g,n})|=g+\frac{n}{2}-1$.
		
		We define $N_{g_0,k,n_0}(X,L)$ as the number of $k$ closed geodesics $\gamma_1,\dots,\gamma_k$ that separate $X$ into $S_{g_0,k+n_0}\cup S_{g-g_0-k+1,k+n-n_0}$ and satisfy $\ell_{\gamma}(X)=\sum\limits_{i=1}^{k}\ell_{\gamma_i}(X)\leqslant L$. Furthermore, we define $N_{g_0,m}(X,L):=\sum\limits_{k=1}^{m}N_{g_0,k,m-k}(X,L)$. Then
		\begin{align}
			\notag	&\frac{\operatorname{Vol}(\{X\in\mathcal{M}_{g,n}(\mathbf{L});\mathcal{L}_1(X)\leqslant L\})}{V_{g,n}(\mathbf{L})}\\
			\notag	\leqslant&\frac{1}{V_{g,n}(\mathbf{L})}\sum_{(g_0,k,n_0)}\operatorname{Vol}\left(X\in \mathcal{M}_{g,n}(\mathbf{L});N_{g_0,k,n_0}(X,L)\geqslant 1\right)\\
			\notag	\leqslant& \sum_{(g_0,k,n_0)}\frac{1}{V_{g,n}(\mathbf{L})}\int_{\mathcal{M}_{g,n}(\mathbf{L})} N_{g_0,k,n_0}(X,L)dX,
		\end{align}
		where the sum over all $1\leqslant 2g_0-2+k+n_0\leqslant [g-1+\frac{n}{2}]$.
		
		By Mirzakhani integral formula \cite[Theorem 7.1]{mirzakhani2007simple}, we have
		\begin{align}
			\notag\int_{\mathcal{M}_{g,n}(\mathbf{L})} N_{g_0,k,n_0}(X,L)dX =&\sum_{i_1<\dots< i_{n_0}}\frac{2^{-M(\gamma)}}{|Sym(\gamma)|}\int_{\mathbb{R}^k_{+}} 1_{[0,L]}(x_1+\dots+x_k)\\
			\notag&\times V_{g_0,k+n_0}(x_1,\dots,x_k,L_{i_1},\dots,L_{i_{n_0}})\\
			\notag&\times V_{g-g_0-k+1,k+n-n_0}(x_1,\dots,x_k,L_{i_{n_0+1}},\dots,L_{i_n})\\
			\notag \leqslant&\sum_{i_1<\dots< i_{n_0}}\frac{2^{-M(\gamma)}}{k!}\int_{\mathbb{R}^k_{+}} 1_{[0,L]}(x_1+\dots+x_k)\\
			\notag&\times V_{g_0,k+n_0}(x_1,\dots,x_k,L_{i_1},\dots,L_{i_{n_0}})\\
			\notag&\times V_{g-g_0-k+1,k+n-n_0}(x_1,\dots,x_k,L_{i_{n_0+1}},\dots,L_{i_n}),
		\end{align}
		where $i_1<\dots<i_{n_0}$ and $i_{n_0+1}<\dots<i_n$, $i_1\dots i_n$ is a permutation of $\{1,\dots,n\}$ and $M(\gamma)$ is defined by
		\begin{equation}
			\notag	M(\gamma) :=
			\begin{cases}
				1, & \text{if } (g_0,k+n_0)=(1,1) \\
				0, & \text{otherwise}.
			\end{cases}
		\end{equation}
		
		Let $P\geqslant 2$ be a fixed positive integer. For the hyperbolic surface $S_{g_0,k+n_0}$, the Euler characterstic which is denoted as $\chi(S_{g_0,k+n_0})$ has three cases:\\
		
		$(\romannumeral1)$ If $|\chi(S_{g_0,k+n_0})|=1$, then $(g_0,k+n_0)=(1,1)$ or $(0,3)$.\\
		
		\textbf{Case a}: If $(g_0,k+n_0)=(1,1)$. From Lemma \ref{Lem4.2}, we obtain that
		\begin{align}
			\notag\int_{\mathcal{M}_{g,n}(\mathbf{L})} N_{1,1}(X,L)dX&=\int_{\mathcal{M}_{g,n}(\mathbf{L})} N_{1,1,0}(X,L)dX\\
			\notag&=\frac{1}{2}\int_{0}^{L}xV_{1,1}(x)V_{g-1,1+n}(x,L_1,\dots,L_n)dx.
		\end{align}
		By table in \cite{mirzakhani2007simple}, \cite[Lemma 3.2]{mirzakhani2013growth} and \cite[Theorem 3.5]{mirzakhani2013growth}, we have
		\begin{align}
			\notag V_{1,1}(x)&=\frac{1}{24}(x^2+4\pi^2)\\
			\notag V_{g,n}(x_1,\dots,x_n)&\leqslant V_{g,n}\prod_{i=1}^{n}\frac{\sinh\frac{x_i}{2}}{\frac{x_i}{2}}\\
			\notag V_{g-1,n+1}&=O\left(\frac{1}{g}\right)V_{g,n},
		\end{align}
		which implies that
		
		\begin{align}
			\label{A1}     &\int_{\mathcal{M}_{g,n}(\mathbf{L})} N_{1,1}(X,L)dX\\
			\notag \leqslant& \int_{0}^{L}\frac{1}{48}(x^2+4\pi^2)x\frac{\sinh\frac{x}{2}}{\frac{x}{2}}\prod_{i=1}^{n}\frac{\sinh \frac{L_i}{2}}{\frac{L_i}{2}}V_{g-1,1+n}dx\\
			\notag =& O\left(L^2e^{\frac{L}{2}}\right)\prod_{i=1}^{n}\frac{\sinh \frac{L_i}{2}}{\frac{L_i}{2}}V_{g-1,1+n}\\
			\notag =& O\left(\frac{L^2}{g}e^{\frac{L}{2}}\right)\prod_{i=1}^{n}\frac{\sinh \frac{L_i}{2}}{\frac{L_i}{2}}V_{g,n}.
		\end{align}
		
		\textbf{Case b}: If $(g_0,k+n_0)=(0,3)$. It follows that
		\begin{align}
			\label{B2}	&\int_{\mathcal{M}_{g,n}(\mathbf{L})}N_{0,3}(X,L)dX\\
			\notag	=& \sum_{k=1}^{3}\int_{\mathcal{M}_{g,n}(\mathbf{L})}N_{0,k,3-k}(X,L)dX\\
			\notag	\leqslant& \sum_{i_1<i_2}\int_{0}^{L}xV_{0,3}(x,L_{i_1},L_{i_2})V_{g,n-1}(x,L_{i_3},\dots,L_{i_n})dx\\
			\notag	&+\frac{1}{2!}\sum_{i=1}^{n}\int_{0\leqslant x+y\leqslant L}xyV_{0,3}(x,y,L_i)V_{g-1,n+1}(x,y,L_1,\dots,\widehat{L_i}\dots,L_n)dxdy\\
			\notag	&+\frac{1}{3!}\int_{0\leqslant x+y+z\leqslant L}xyzV_{0,3}(x,y,z)V_{g-2,n+3}(x,y,z,L_1,\dots,L_n)dxdydz\\
			\notag  =: &I_1+I_2+I_3.
		\end{align}
		
		Again, by the table in \cite{mirzakhani2007simple} and \cite[Lemma 3.2]{mirzakhani2013growth}:
		\begin{align}
			\notag V_{0,3}(x,y,z)&=1,\\
			\notag V_{g,n}(x_1,\dots,x_n)&\leqslant V_{g,n}\prod_{i=1}^{n}\frac{\sinh\frac{x_i}{2}}{\frac{x_i}{2}},
		\end{align}
		we can get
		\begin{align}
			\label{B3} I_1&\leqslant\sum_{i_1<i_2}\int_{0}^{L}xV_{g,n-1}\frac{\prod_{i=1}^{n}\frac{\sinh\frac{L_i}{2}}{\frac{L_i}{2}}}{\frac{\sinh \frac{L_{i_1}}{2}}{\frac{L_{i_1}}{2}}\frac{\sinh\frac{L_{i_2}}{2}}{\frac{L_{i_2}}{2}}}\frac{\sinh\frac{x}{2}}{\frac{x}{2}}dx\\
			\notag &=\prod_{i=1}^{n}\frac{\sinh\frac{L_i}{2}}{\frac{L_i}{2}}V_{g,n-1}O\left(n^2e^{\frac{L}{2}}\right).
		\end{align}
		
		Similarly, we estimate $I_2$ and $I_3$ as following,
		\begin{align}
			\label{B4}
			I_2&\leqslant\frac{1}{2!}\sum_{i=1}^{n}\int_{0\leqslant x+y\leqslant L}xyV_{g-1,n+1}\frac{\prod_{j=1}^{n}\frac{\sinh \frac{L_j}{2}}{\frac{L_j}{2}}}{\frac{\sinh \frac{L_i}{2}}{\frac{L_i}{2}}}\frac{\sinh \frac{x}{2}}{\frac{x}{2}}\frac{\sinh\frac{y}{2}}{\frac{y}{2}}dxdy\\
			\notag&=\prod_{i=1}^{n}\frac{\sinh\frac{L_i}{2}}{\frac{L_i}{2}}V_{g,n-1}O\left(nLe^{\frac{L}{2}}\right),\\
			\label{B5}
			I_3&\leqslant\frac{1}{3!}\int_{0\leqslant x+y+z\leqslant L}xyzV_{g-2,n+3}\prod_{i=1}^{n}\frac{\sinh\frac{L_i}{2}}{\frac{L_i}{2}}\frac{\sinh\frac{x}{2}}{\frac{x}{2}}\frac{\sinh\frac{y}{2}}{\frac{y}{2}}\frac{\sinh\frac{z}{2}}{\frac{z}{2}}dxdydz\\
			\notag&=\prod_{i=1}^{n}\frac{\sinh\frac{L_i}{2}}{\frac{L_i}{2}}V_{g,n-1}O\left(L^2e^{\frac{L}{2}}\right).
		\end{align}
		
		Note that \cite[Lemma 3.2, Theorem 3.5]{mirzakhani2013growth} implies
		\begin{align}
			\notag V_{g-2,n+3}&\approx V_{g-1.n+1}\approx V_{g,n-1},\\
			\notag V_{g,n-1}&=O\left(\frac{1}{g}\right)V_{g,n}.
		\end{align}
		Hence, combining with (\ref{B2})--(\ref{B5}), it follows that
		\begin{align}
			\label{A2}	&\int_{\mathcal{M}_{g,n}(\mathbf{L})}N_{0,3}(X,L)dX\\
			\notag	=& \prod_{i=1}^{n}\frac{\sinh\frac{L_i}{2}}{\frac{L_i}{2}}\left(O\left(\frac{n^2+nL+L^2}{g}e^{\frac{L}{2}}\right)\right)V_{g,n}\\
			\notag	=& \prod_{i=1}^{n}\frac{\sinh\frac{L_i}{2}}{\frac{L_i}{2}}\left(O\left(\frac{\widetilde{L}^2}{g}e^{\frac{L}{2}}\right)\right)V_{g,n},
		\end{align}
		where $\widetilde{L}=\max\{L,n\}$.
		\\
		
		$(\romannumeral2)$ If $|\chi(S_{(g_0,k+n_0)})|=m$, where $2\leqslant m\leqslant P$. It follows from \cite[Theorem 1.1]{mirzakhani2013growth} that $V_{g_0,k+n_0}(x_1,\dots,x_k,L_{i_1},\dots,L_{i_{n_0}})$
		is a polynomial of degree $6g_0-6+2k+2n_0$, where the coefficients are bounded by some constant $c(m)$ depending only on $m$. Hence, for any  $\sum\limits_{j=1}^{n_0}L_{i_j}\in [0,L]$ with $L>1 $, we have
		\[V_{g_0,k+n_0}(L_{i_1},\dots,L_{i_{n_0}})\leqslant c(m)(L^{6g_0-6+2k+2n_0}).\]
		
		Therefore, we obtain
		\begin{align}
			\notag	&\int_{\mathcal{M}_{g,n}(\mathbf{L})}N_{g_0,k,n_0}(X,L)\\
			\notag	\leqslant&\sum\limits_{i_1<\dots<i_{n_0}}\frac{1}{k!}\int_{0\leqslant\sum\limits_{i=1}^{k}x_i\leqslant L}\prod_{i=1}^{k}x_iV_{g_0,k+n_0}(x_1,\dots,x_k,L_{i_1},\dots,L_{i_{n_0}})\\
			\notag	&\times V_{g-g_0-k+1,k+n-n_0}(x_1,\dots,x_k,L_1,\dots,\widehat{L_{i_1}},\dots,\widehat{L_{i_{n_0}}},\dots,L_n)dx_1\dots dx_k\\
			\notag	\leqslant&\sum_{i_1<\dots<i_{n_0}}\frac{1}{k!}\int_{0\leqslant\sum\limits_{i=1}^{k}x_i\leqslant L}c(m)L^{6g_0-6+2k+2n_0}\prod_{i=1}^{k}x_i\frac{\sinh\frac{x_i}{2}}{\frac{x_i}{2}}V_{g-g_0-k+1,k+n-n_0}\\
			\notag	&\times\prod_{1\leqslant i\leqslant n,i\neq i_1,\dots,i_{n_0}}\frac{\sinh\frac{L_i}{2}}{\frac{L_i}{2}}dx_1\dots dx_k\\
			\notag	\leqslant&\prod_{i=1}^{n}\frac{\sinh\frac{L_i}{2}}{\frac{L_i}{2}}\frac{c(m)L^{6g_0-6+2k+2n_0}}{k!}\binom{n}{n_0}V_{g-g_0-k+1,k+n-n_0}\\
			\notag	&\times \int_{0\leqslant \sum\limits_{i=1}^{k}x_i\leqslant L}\prod_{i=1}^{k}x_i\frac{\sinh\frac{x_i}{2}}{\frac{x_i}{2}}dx_1\dots dx_k.
		\end{align}
		
		Notice that
		\begin{align*}
			\binom{n}{n_0}&\leqslant n^{n_0},\\
			\prod_{i=1}^{k}x_i\frac{\sinh\frac{x_i}{2}}{\frac{x_i}{2}}&=\prod_{i=1}^{k}2\sinh\frac{x_i}{2}
			\leqslant e^{\frac{\sum_{i=1}^{k}x_i}{2}}\leqslant e^{\frac{L}{2}},
		\end{align*}
		and the integral formula
		\begin{equation*}
			\int_{0\leqslant\sum\limits_{i=1}^{k}L_i\leqslant L}dx_1\dots dx_k=\frac{L^k}{k!},
		\end{equation*}
		we combine these properties and get the following estimate
		
		\begin{align}
			\label{A3}	&\int_{\mathcal{M}_{g,n}(\mathbf{L})}N_{g_0,k,n_0}(X,L)\\
			\notag \leqslant&c(m)\prod_{i=1}^{n}\frac{\sinh\frac{L_i}{2}}{\frac{L_i}{2}}L^{6g_0-6+3k+2n_0}e^{\frac{L}{2}}n^{n_0}V_{g-g_0-k+1,k+n-n_0}\\
			\notag \leqslant&c(m)\prod_{i=1}^{n}\frac{\sinh\frac{L_i}{2}}{\frac{L_i}{2}}L^{3m-n_0}e^{\frac{L}{2}}n^{n_0}V_{g,n-(2g_0-2+k+n_0)}\\
			\notag \leqslant&c(m)\prod_{i=1}^{n}\frac{\sinh\frac{L_i}{2}}{\frac{L_i}{2}}e^{\frac{L}{2}}\frac{L^{3m-n_0}n^{n_0}}{g^m}V_{g,n}\\
			\notag \leqslant&\prod_{i=1}^{n}c(m)\frac{\sinh\frac{L_i}{2}}{\frac{L_i}{2}}e^{\frac{L}{2}}\frac{\widetilde{L}^{3m}}{g^m}V_{g,n}.
		\end{align}
		
		For fixed $m$ and $n_0$, there exists finitely many $(g_0,k)$ satisfy  $2g_0-2+k+n_0=m$. Hence, we have
		\begin{align}
			\notag	&\sum_{(g_0,k,n_0):2g_0-2+k+n_0=m}\int_{\mathcal{M}_{g,n}(\mathbf{L})}N_{g_0, k+n_0}(X,L)dX\\
			\notag	&=\sum_{n_0=0}^{t}\sum_{(g_0,k):2g_0-2+k=m-n_0}\int_{\mathcal{M}_{g,n}(\mathbf{L})}N_{g_0,k,n_0}(X,L)dX\\
			\notag	 &\leqslant\sum_{n_0=0}^{t}\sum_{(g_0,k):2g_0-2+k=m-n_0}c(m)\prod_{i=1}^{n}\frac{\sinh\frac{L_i}{2}}{\frac{L_i}{2}}e^{\frac{L}{2}}\frac{\widetilde{L}^{3m}}{g^m}V_{g,n}\\
			\notag	&\leqslant\sum_{n_0=0}^{t}\left( \frac{(m+2)^2}{2}\right)c(m)\prod_{i=1}^{n}\frac{\sinh\frac{L_i}{2}}{\frac{L_i}{2}}e^{\frac{L}{2}}\frac{\widetilde{L}^{3m}}{g^m}V_{g,n}\\
			\notag	&=C(m)\prod_{i=1}^{n}\frac{\sinh\frac{L_i}{2}}{\frac{L_i}{2}}e^{\frac{L}{2}}O \left(\frac{\widetilde{L}^{3m+1}}{g^m}\right)V_{g,n},
		\end{align}
		where $t=\min\{n,m+2\}$ and $C(m)=\frac{(m+2)^2}{2}c(m)$.\\
		
		$(\romannumeral3)$ If $|\chi(S_{g_0,k+n_0})|=2g_0-2+k+n_0=m\geqslant P+1$, then
		\begin{align}
			\notag	&\int_{\mathcal{M}_{g,n}(\mathbf{L})}N_{g_0,k,n_0}(X,L)dX\\
			\notag	\leqslant&\sum\limits_{i_1<\dots<i_{n_0}}\frac{1}{k!}\int_{0\leqslant \sum\limits_{i=1}^k x_i\leqslant L}\prod_{i=1}^{k}x_iV_{g_0,k+n_0}(x_1,\dots,x_k,L_{i_1},\dots,L_{i_{n_0}})\\
			\notag	&\times V_{g-g_0-k+1,k+n-n_0}(x_1,\dots,x_k,L_1,\dots,\widehat{L_{i_1}},\dots,\widehat{L_{i_{n_0}}},\dots,L_n)dx_1\dots dx_k\\
			\notag \leqslant& \sum\limits_{i_1<\dots<i_{n_0}}\frac{1}{k!}\int_{0\leqslant \sum\limits_{i=1}^k x_i \leqslant L}\prod_{i=1}\left(\frac{\sinh\frac{x_i}{2}}{\frac{x_i}{2}}\right)^2\\
			\notag&\times\prod_{i=1}^{n}\frac{\sinh\frac{L_i}{2}}{\frac{L_i}{2}}\prod_{i=1}^{k}x_iV_{g_0,k+n_0}V_{g-g_0-k+1,k+n-n_0}dx_1\dots dx_k\\
			\notag \leqslant& \sum\limits_{i_1<\dots<i_{n_0}}\frac{1}{k!}V_{g_0,k+n_0}V_{g-g_0-k+1,k+n-n_0}\\
			\notag &\times\prod_{i=1}^{n}\frac{\sinh\frac{L_i}{2}}{\frac{L_i}{2}}\int_{0\leqslant \sum\limits_{i=1}^kx_i\leqslant L}e^{\sum\limits_{i=1}^{k}x_i}\prod_{i=1}^{k}x_idx_1\dots dx_k\\
			\notag\leqslant& \sum\limits_{i_1<\dots<i_{n_0}}\frac{1}{k!}V_{g_0,k+n_0}V_{g-g_0-k+1,k+n-n_0}e^{L}\prod_{i=1}^{n}\frac{\sinh\frac{L_i}{2}}{\frac{L_i}{2}}\int_{0\leqslant \sum\limits_{i=1}^kx_i\leqslant L}\prod_{i=1}^{k}x_idx_1\dots dx_k\\
			\notag =&\frac{1}{k!}\binom{n}{n_0}\frac{L^{2k}}{(2k)!}V_{g_0,k+n_0}V_{g-g_0-k+1,k+n-n_0}e^{L}\prod_{i=1}^{n}\frac{\sinh\frac{L_i}{2}}{\frac{L_i}{2}}.
		\end{align}
		
		Hence, we obtain
		\begin{align}
			\notag	&\sum_{(g_0,k,n_0):|\chi(S_{g_0,k+n_0})|\geqslant P+1}\int_{\mathcal{M}_{g,n}(\mathbf{L})}N_{g_0,k+n_0}(X,L)dX\\
			\notag	&=\sum_{g-1+\left[\frac{n}{2}\right]\geqslant 2g_0-2+k+n_0\geqslant P+1,k\geqslant 1}\int_{\mathcal{M}_{g,n}(\mathbf{L})}N_{g_0,k,n_0}(X,L)dX\\
			\notag	&\leqslant\sum_{g-1+\left[\frac{n}{2}\right]\geqslant 2g_0-2+k+n_0\geqslant P+1,k\geqslant 1}\frac{1}{k!}\binom{n}{n_0}\frac{L^{2k}}{(2k)!}V_{g_0,k+n_0}V_{g-g_0-k+1,k+n-n_0}e^{L}\prod_{i=1}^{n}\frac{\sinh\frac{L_i}{2}}{\frac{L_i}{2}}.
		\end{align}
		
		By \cite[Lemma 3.2]{mirzakhani2013growth}, we can see that $V_{g-1,n+2}\leqslant V_{g,n}$ for any $n\geqslant 2$. Now we find $k'\in\{1,2,3\}$ such that $k+n_0-k'$ is even.
		
		If $n$ is even, then
		\begin{equation}
			\notag V_{g-g_0-k+1,,k+n-n_0}\leqslant V_{g-g_0-k+1+\frac{n}{2}-n_0,k+n_0}\leqslant V_{g-g_0-k+1+\frac{n}{2}+\frac{k+n_0-k'}{2},k'}.
		\end{equation}
		On the other hand, $V_{g_0,k+n_0}\leqslant V_{g_0+\frac{k+n_0-k'}{2},k'}$.
		
		Let $u=\max\left\{0,\left[\frac{P+3-k-n_0}{2}\right]\right\}$,
		then \cite[Corollary 3.7]{mirzakhani2013growth} implies
		\begin{align}
			\notag	&\sum_{g_0=u}^{\frac{g-1+\left[\frac{n}{2}\right]-k-n_0}{2}}V_{g_0,k+n_0}V_{g-g_0-k+1,k+n-n_0}\\
			\notag	&\leqslant\sum_{g_0=u}^{\frac{g-1+\left[\frac{n}{2}\right]-k-n_0}{2}} V_{g_0+\frac{k+n_0-k'}{2},k'}V_{g-g_0-k+1+\frac{n}{2}+\frac{k+n_0-k'}{2}-n_0,k'}\\
			\notag	&\leqslant D\frac{V_{g+\frac{n}{2}}}{g^{2u+k+n_0-2}},
		\end{align}
		where $D$ is a constant independent on $(g,n)$.
		
		By \cite[Theorem 3.5]{mirzakhani2013growth} and taking $n=g^{o(1)}$, we see $\frac{V_{g,n}}{V_{g+\frac{n}{2}}}=1+O\left(\frac{n^2}{4g}\right)$, yielding that
		\begin{equation}
			\notag	\sum_{g_0=u}^{\left[\frac{2g-2+n}{2}\right]}V_{g_0,k+n_0}V_{g-g_0-k+1,k+n-n_0}\leqslant D\frac{V_{g,n}}{g^{2u+k+n_0-2}}.
		\end{equation}
		
		Otherwise, if $n$ is odd, by \cite[Theorem 3.5]{mirzakhani2013growth}, there exists a constant $A_1$ independent on $(g,n)$ such that
		\begin{equation}
			\notag	V_{g-g_0-k+1,k+n-n_0}\leqslant A_1\frac{V_{g-g_0-k+1,k+(n+1)-n_0}}{2g-2g_0-k+n-n_0}\leqslant A\frac{V_{g-g_0-k+1,k+(n+1)-n_0}}{g-1+[\frac{n}{2}]}.
		\end{equation}
		
		Since $n+1$ is even, then we have
		\begin{align}
			\notag	&\sum_{g_0=u}^{\frac{g-1+\left[\frac{n}{2}\right]-k-n_0}{2}}V_{g_0,k+n_0}V_{g-g_0-k+1,k+n-n_0}\\
			\notag	&= \sum_{g_0=u}^{\frac{g-1+\left[\frac{n}{2}\right]-k-n_0}{2}}\frac{A_1}{g-1+[\frac{n}{2}]}V_{g_0,k+n_0}V_{g-g_0-k+1,k+(n+1)-n_0}\\
			\notag	 &\leqslant\sum_{g_0=u}^{\frac{g-1+\left[\frac{n}{2}\right]-k-n_0}{2}}\frac{A_1}{g-1+[\frac{n}{2}]}V_{g_0+\frac{k+n_0-k'}{2},k'}V_{g-g_0+\frac{n+1}{2}-k+1-n_0+\frac{k+n_0-k'}{2},k'}\\
			\notag	&=\frac{A_1D}{g-1+[\frac{n}{2}]}\frac{V_{g+\frac{n+1}{2}}}{g^{2u+k+n_0-2}}.
		\end{align}
		Moreover, take $n=g^{o(1)}$, it follows from \cite[Theorem 3.5]{mirzakhani2013growth} that  \[\frac{V_{g,n+1}}{V_{g+\frac{n+1}{2}}}=1+O\left(\frac{(n+1)^2}{4g}\right).\]
		By \cite[Lemma 3.2]{mirzakhani2013growth}, there exists a constant $A_2$ independent on $(g,n)$ that
		\begin{equation}
			\notag	V_{g,n+1}\leqslant A_2(2g-2+n+1)V_{g,n}.
		\end{equation}
		
		Then we can deduce
		\begin{align}
			\notag&\sum_{g_0=u}^{\frac{g-1+\left[\frac{n}{2}\right]-k-n_0}{2}}V_{g_0,k+n_0}V_{g-g_0-k+1,k+n-n_0}\\
			\notag&= \frac{A_1A_2D(2g-2+n+1)}{g-1+[\frac{n}{2}]}\frac{V_{g,n}}{g^{2u+k+n_0-2}}\left(1+O\left(\frac{(n+1)^2}{4g}\right)\right)\\
			\notag&= 3A_1A_2D\frac{V_{g,n}}{g^{2u+k+n_0-2}}\left(1+O\left(\frac{(n+1)^2}{4g}\right)\right).
		\end{align}
		
		We summarize the above discussions to see that there exists a universal constant $D'$ independent on $g,n$ such that for each $n$,
		\begin{equation}
			\notag	\sum_{g_0=u}^{\frac{g-1+\left[\frac{n}{2}\right]-k-n_0}{2}}V_{g_0,k+n_0}V_{g-g_0-k+1,k+n-n_0}\leqslant D'\frac{V_{g,n}}{g^{2u+k+n_0-2}}.
		\end{equation}
		
		Hence we deduce that
		\begin{align}
			\label{C1}	&\sum_{(g_0,k,n_0):P+1\leqslant|\chi(S_{g_0,k+n_0})|\leqslant g-1+[\frac{n}{2}]}\int_{\mathcal{M}_{g,n}(\mathbf{L})}N_{g_0,m-2g_0+2}(X,L)dX\\
			\notag\leqslant&\sum_{1\leqslant k+n_0\leqslant g-1+[\frac{n}{2}]+2,k\geqslant 1}\frac{1}{k!}\binom{n}{n_0}\frac{L^{2k}}{(2k)!}e^{L}\prod_{i=1}^{n}\frac{\sinh\frac{L_i}{2}}{\frac{L_i}{2}}D'\frac{V_{g,n}}{g^{2u+k+n_0-2}}\\
			\notag\leqslant& \sum_{1\leqslant k+n_0\leqslant P+2,k\geqslant 1}\frac{1}{k!}\binom{n}{n_0}\frac{L^{2k}}{(2k)!}e^{L}\prod_{i=1}^{n}\frac{\sinh\frac{L_i}{2}}{\frac{L_i}{2}}D'\frac{V_{g,n}}{g^{P}}\\
			\notag	&+\sum_{P+3\leqslant k+n_0\leqslant g-1+[\frac{n}{2}]+2,k\geqslant 1}\frac{1}{k!}\binom{n}{n_0}\frac{L^{2k}}{(2k)!}e^{L}\prod_{i=1}^{n}\frac{\sinh\frac{L_i}{2}}{\frac{L_i}{2}}D'\frac{V_{g,n}}{g^{-2+k+n_0}}\\
			\notag =:& J_1+J_2.
		\end{align}
		
		For $J_1$, it can be estimated that
		\begin{align}
			\label{C2}	J_1&\leqslant e^{L}\prod_{i=1}^{n}\frac{\sinh\frac{L_i}{2}}{\frac{L_i}{2}}D'\frac{V_{g,n}}{g^{P}}\sum_{n_0=0}^{P}\binom{n}{n_0}\sum_{k=1}^{P+2-n_0}\frac{1}{k!}\frac{L^{2k}}{(2k)!}\\
			\notag&\leqslant e^{2L}\prod_{i=1}^{n}\frac{\sinh\frac{L_i}{2}}{\frac{L_i}{2}}D'\frac{V_{g,n}}{g^{P}}\sum_{n_0=0}^{P}n^{n_0}\\
			\notag&\leqslant e^{2L}\prod_{i=1}^{n}\frac{\sinh\frac{L_i}{2}}{\frac{L_i}{2}}D'\frac{V_{g,n}}{g^{m+1}}\sum_{n_0=0}^{P}n^{n_0}\\
			\notag	&= e^{2L}\prod_{i=1}^{n}\frac{\sinh\frac{L_i}{2}}{\frac{L_i}{2}} V_{g,n} O\left(\frac{(P+1)n^{P}}{g^{P-1}}\right).
		\end{align}
		
		And for $J_2$,
		\begin{align}
			\notag	J_2&\leqslant e^{2L}V_{g,n}\prod_{i=1}^{n}\frac{\sinh\frac{L_i}{2}}{\frac{L_i}{2}}\sum_{m+3\leqslant k+n_0\leqslant g-1+[\frac{n}{2}]+2,k\geqslant 1}D'\frac{n^{n_0}}{g^{-2+k+n_0}}\\
			\notag	&\leqslant e^{2L}V_{g,n}\prod_{i=1}^{n}\frac{\sinh\frac{L_i}{2}}{\frac{L_i}{2}}\sum_{m+3\leqslant k+n_0\leqslant g-1+[\frac{n}{2}]+2,k\geqslant 1}D'\frac{n^{n_0+k-2+1}}{g^{-2+k+n_0}}.
		\end{align}
		
		Let $t=k+n_0$,
		\begin{align}
			\label{C3}J_2&\leqslant 2 e^{2L}V_{g,n}\prod_{i=1}^{n}\frac{\sinh\frac{L_i}{2}}{\frac{L_i}{2}}( g-1+[\frac{n}{2}]+2)\sum_{P+3\leqslant t,k\geqslant 1}D'\frac{n^{t-1}}{g^{t-2}}\\
			\notag&=2 e^{2L}V_{g,n}\prod_{i=1}^{n}\frac{\sinh\frac{L_i}{2}}{\frac{L_i}{2}}O\left(\frac{n^{P+2}}{g^{P}}\right),
		\end{align}
		where we apply the geometric series formula and take $n=g^{o(1)}$ to obtain the last formula.\\
		
		In the end, we take $P=10$, and combine with the above estimates (\ref{A1}), (\ref{A2})--(\ref{C3}) to deduce that
		
		\begin{align}
			\notag&\operatorname{Vol}(\{X\in\mathcal{M}_{g,n}(\mathbf{L});\mathcal{L}_1(X)\leqslant L\})\\
			\notag\leqslant& \int_{\mathcal{M}_{g,n}(\mathbf{L})}N_{1,1}(X,L)dX+ \int_{\mathcal{M}_{g,n}(\mathbf{L})}N_{0,3}(X,L)dX\\
			\notag&+\sum_{(g_0,k,n_0):2\leqslant 2g_0-2+k+n_0\leqslant 10}\int_{\mathcal{M}_{g,n}(\mathbf{L})}N_{g_0,k,n_0}(X,L)dX\\
			\notag&+\sum_{(g_0,k,n_0):11\leqslant 2g_0-2+k+n_0\leqslant g-1+[\frac{n}{2}]}\int_{\mathcal{M}_{g,n}(\mathbf{L})}N_{g_0,k,n_0}(X,L)dX\\
			\notag\leqslant& \prod_{i=1}^{n}\frac{\sinh \frac{L_i}{2}}{\frac{L_i}{2}}V_{g,n}O\left(\frac{\widetilde{L}^2}{g}e^{\frac{L}{2}}+\frac{\widetilde{L}^2}{g}e^{\frac{L}{2}}+\sum_{m=2}^{10}C(m)e^{\frac{L}{2}}\frac{\widetilde{L}^{3m+1}}{g^m}+e^{2L}\frac{11n^{10}}{g^{9}}+e^{2L}\frac{n^{12}}{g^{10}}\right).
		\end{align}
		
		Then by \cite[Lemma 22]{nie2023large} and take $n=g^{o(1)},\sum\limits_{i=1}^{n}L_i=g^{o(1)}$, it follows that
		\begin{equation}
			\notag V_{g,n}(\mathbf{L})=(1-O(g^{-1+o(1)}))\prod_{i=1}^{k}\frac{\sinh \frac{L_i}{2}}{\frac{L_i}{2}}V_{g,n}.
		\end{equation}
		Hence we have
		\begin{equation}
			\label{Eq5.1}	\frac{\operatorname{Vol}(\{X\in\mathcal{M}_{g,n}(\mathbf{L});\mathcal{L}_1(X)\leqslant L(g)\})}{V_{g,n}(\mathbf{L})}= O\left(\frac{\widetilde{L}^2}{g}e^{\frac{L}{2}}+Ce^{\frac{L}{2}}\frac{\widetilde{L}^{7}}{g^2}+e^{2L}\frac{11n^{10}}{g^{9}}+e^{2L}\frac{n^{12}}{g^{10}}\right),
		\end{equation}
		where $C=\max\limits_{2\leqslant m\leqslant 10}C(m)$.\\
		
		Therefore, we take $n=c\log g$ and $L(g)=(2-\varepsilon)\log g$ for some $\varepsilon,c>0$ to obtain
		\begin{equation*}
			\lim\limits_{g\rightarrow\infty}\frac{\operatorname{Vol}(\{X\in\mathcal{M}_{g,n}(\mathbf{L});\mathcal{L}_1(X)\leqslant (2-\varepsilon)\log g\})}{V_{g,n}(\mathbf{L})}=0,
		\end{equation*}
		which we complete the proof of Theorem \ref{Thm5.1}.
	\end{proof}
	
	Furthermore, taking $n\leqslant c\log g$ and $L= A\log\log g$ for some fixed constants $c$ and $A$, we get the following corollary, which will be used to prove Theorem \ref{Thm1.1},
	\begin{cor}\label{C4.2}
		Let $n\leqslant c\log g$ and $L_i\leqslant L=A\log\log g$ for $i=1,\dots,n$, we have
		\begin{equation}
			\label{T3}		\frac{\operatorname{Vol}(\mathcal{N}_{g,n}^L(\mathbf{L}))}{V_{g,n}(\mathbf{L})}=1-O(g^{-1+o(1)}).
		\end{equation}
	\end{cor}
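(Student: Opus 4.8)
The plan is to bound the volume of the complement $\mathcal{M}_{g,n}(\mathbf{L})\setminus\mathcal{N}_{g,n}^L(\mathbf{L})$ and feed it into Theorem \ref{Thm5.1}. Since
\[
\frac{\operatorname{Vol}(\mathcal{N}_{g,n}^L(\mathbf{L}))}{V_{g,n}(\mathbf{L})}=1-\frac{\operatorname{Vol}\big(\mathcal{M}_{g,n}(\mathbf{L})\setminus\mathcal{N}_{g,n}^L(\mathbf{L})\big)}{V_{g,n}(\mathbf{L})},
\]
it suffices to show that the complement has volume $O(g^{-1+o(1)})V_{g,n}(\mathbf{L})$. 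The key reduction is the geometric containment
\[
\mathcal{M}_{g,n}(\mathbf{L})\setminus\mathcal{N}_{g,n}^L(\mathbf{L})\subseteq E_{g,n}^{CL}(\mathbf{L})
\]
for some universal constant $C>0$: I claim that a surface failing the defining collar condition of $\mathcal{N}_{g,n}^L(\mathbf{L})$ must carry a separating simple closed multi-geodesic of total length at most $CL$. Granting this, the corollary will follow by applying Theorem \ref{Thm5.1} with $L$ replaced by $CL$.

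The heart of the argument (and the step I expect to be the main obstacle) is establishing this containment. By the definition of $w(X)$ recorded in Section \ref{S3}, a surface $X$ fails the condition precisely when either (a) some primitive geodesic $\gamma$ with $\ell(\gamma)<L$, or some boundary $\beta$, has maximal (half-)collar width $<\tfrac14 L$, or (b) two such curves lie within distance $<\tfrac12 L$ of each other; in addition, a short \emph{non-simple} geodesic automatically violates the condition, since it admits no collar. In every case one produces short curves or a geodesic arc $\delta$ of length $<\tfrac12 L$ joining the relevant short curve(s) to itself or to its neighbour. I would then take a regular neighbourhood $\mathcal{N}$ of the graph formed by these short curves together with $\delta$; being an embedded subsurface, $\mathcal{N}$ has boundary $\partial\mathcal{N}$ which is a \emph{separating} multicurve, and it has an essential, non-peripheral component because $\delta$ is essential. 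Passing to geodesic representatives only decreases length, and each edge of the graph is traversed at most twice, so the resulting separating simple closed multi-geodesic has total length $\leq 2\big(\ell(\gamma)+\ell(\beta)+2\ell(\delta)\big)+O(1)=O(L)$, whence $\mathcal{L}_1(X)\leq CL$, i.e. $X\in E_{g,n}^{CL}(\mathbf{L})$. The delicate points are verifying that $\mathcal{N}$ always carries an essential separating curve (ruling out the case where $\partial\mathcal{N}$ is entirely trivial or boundary-parallel) and handling the non-simple and boundary cases uniformly; here one also uses the observation that a surface with no short interior geodesic whose boundaries have well-separated, wide half-collars already lies in $\mathcal{N}_{g,n}^L(\mathbf{L})$, so that once $n\geq 1$ the large-systole stratum contributes nothing new to the complement.

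With the containment in hand, the remaining step is a substitution into Theorem \ref{Thm5.1}. Taking $L=A\log\log g$ and $n\leq c\log g$, we have $CL=CA\log\log g$, so $\widetilde{L}=\max\{CL,n\}=O(\log g)=g^{o(1)}$, $e^{CL/2}=(\log g)^{CA/2}=g^{o(1)}$ and $e^{2CL}=(\log g)^{2CA}=g^{o(1)}$; in particular the hypotheses $\varlimsup_{g\to\infty}\tfrac{\log n}{\log g}=0$ and $\varlimsup_{g\to\infty}\tfrac{\log(CL)}{\log g}=0$ hold. Theorem \ref{Thm5.1} then gives
\[
\frac{\operatorname{Vol}(E_{g,n}^{CL}(\mathbf{L}))}{V_{g,n}(\mathbf{L})}\leq c_1\frac{\widetilde{L}^2}{g}e^{CL/2}+c_2 e^{CL/2}\frac{\widetilde{L}^{7}}{g^2}+c_3 e^{2CL}\frac{n^{10}}{g^{9}}+c_4 e^{2CL}\frac{n^{12}}{g^{10}},
\]
where the four terms are $O(g^{-1+o(1)})$, $O(g^{-2+o(1)})$, $O(g^{-9+o(1)})$ and $O(g^{-10+o(1)})$ respectively, the first dominating. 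Combining this with the containment yields $\operatorname{Vol}\big(\mathcal{M}_{g,n}(\mathbf{L})\setminus\mathcal{N}_{g,n}^L(\mathbf{L})\big)=O(g^{-1+o(1)})V_{g,n}(\mathbf{L})$, which is exactly the estimate (\ref{T3}). Thus the only genuine work lies in the geometric containment; the volume estimate is then a direct insertion into Theorem \ref{Thm5.1}.
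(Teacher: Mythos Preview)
Your proposal is correct and follows essentially the same strategy as the paper: establish the containment $\mathcal{M}_{g,n}(\mathbf{L})\setminus\mathcal{N}_{g,n}^L(\mathbf{L})\subseteq E_{g,n}^{CL}(\mathbf{L})$ and then invoke Theorem~\ref{Thm5.1}. The only difference is in the geometric step: rather than a regular-neighbourhood argument, the paper carries out an explicit two-case pair-of-pants construction (collar of a single short curve self-bumping, or two short (half-)collars meeting), obtaining the concrete constant $C=5$ and sidestepping the essentiality check on $\partial\mathcal{N}$ that you flag as delicate.
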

	\begin{proof}
		We only need to prove that $\left(\mathcal{N}_{g,n}^L(\mathbf{L})\right)^c\subseteq E_{g,n}^{5L}(\mathbf{L})$.
		
		If $X\in\mathcal{M}_{g,n}(\mathbf{L})$ but $X\not\in\mathcal{N}_{g,n}^L(\mathbf{L})$, then $w(X)<\frac{1}{4}L$. There are two cases:\\
		
		\textbf{Case 1}: There exists a closed geodesic or a boundary geodesic $\gamma$ with length less than $L$ and $w_{\gamma}=w(X)$. Then there exists a point $P_0$ on the boundary $\partial \mathcal{C}(\gamma,w(X))-\gamma$ and two distinct points $P_1$ and $P_2$ on $\gamma$ such that the distances $d(P_0,P_1)$ and $d(P_0,P_2)$ both equal to $w(X)$. $P_1$ and $P_2$ divide $\gamma$ into  two sub-geodesics $\gamma_1$ and $\gamma_2$. We denote the geodesic arc from $P_2$ to $P_1$ that passes through $P_0$ as $\gamma_0$. The geodesic arcs $\gamma_0$, $\gamma_1$, and $\gamma_2$ are then given appropriate orientation(see Figure \ref{fig1} for an illustration).
		\begin{figure}[b]
			\centering
			\includegraphics[width=10cm]{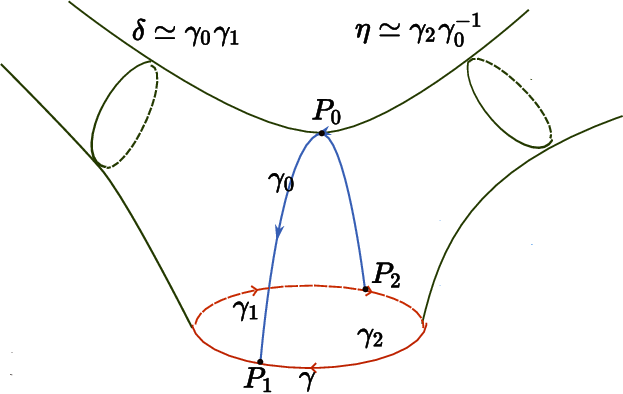}
			\caption{$w_{\gamma}<\frac{1}{4}L$}
			\label{fig1}
		\end{figure}
		
		Consider the closed curves $\gamma_1\gamma_0$ and $\gamma_0^{-1}\gamma_2$, which are homotopic to closed geodesics $\delta$ and $\eta$, respectively. Together with $\gamma$, these geodesics partition the surface $X$ into a pair of pants and several other components. We can choose a subset of the three geodesics to partition $X$ into two parts such that the total length of the selected geodesics is no greater than $2\ell(\gamma)+2\ell(\gamma_0)$, and this value is in turn no greater than $3L$.\\
		
		\textbf{Case 2}: Two collars or half-collars $\partial\mathcal{C}(\gamma_1,w(X)) \cap \partial \mathcal{C}(\gamma_2,w(X))\neq \varnothing $, where $\gamma_2$ may be $\gamma_1^{-1}$ with an approximate orientation. This implies the existence of a point $P_0 \in \partial \mathcal{C}(\gamma_1,w(X))\cap\partial \mathcal{C}(\gamma_2,w(X))$, along with points $P_1$ on $\gamma_1$ and $P_2$ on $\gamma_2$ such that $d(P_0,P_1)=d(P_0,P_2)=w(X)<\frac{1}{4}L$. We denote by $\gamma_0$ the geodesic arc from $P_1$ to $P_2$ that passes through $P_0$. The geodesic arcs $\gamma_0$, $\gamma_1$, and $\gamma_2$ are each oriented appropriately. See Figure \ref{fig2} for an illustration.
		\begin{figure}
			\centering
			\includegraphics[width=6cm]{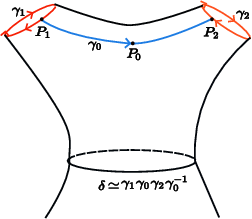}
			\caption{$\frac{1}{2}\operatorname{dist}(\gamma_1,\gamma_2)<\frac{1}{4}L$}
			\label{fig2}
		\end{figure}
		Consider the closed curve $\gamma_1\gamma_0\gamma_2\gamma_0^{-1}$, which is homotopic to the closed geodesic $\delta$. The geodesics $\delta$, $\gamma_1$, and $\gamma_2$ partition $X$ into a pair of pants and several other components. We can choose a subset of these three geodesics to partition $X$ into two parts, with the total length of the selected geodesics bounded by $2\ell(\gamma_0)+2\ell(\gamma_1)+2\ell(\gamma_2)$, which is at most $5L$.\\
		
		In both cases, we observe that $\mathcal{L}_1(X)\leqslant 5L$, and hence
		\begin{equation}
			\notag	(\mathcal{N}_{g,n}^L(\mathbf{L}))^c\subseteq E_{g,n}^{5L}(\mathbf{L}).
		\end{equation}
		
		Now we use (\ref{Eq5.1}) and $L=A\log \log g$, we see that
		\begin{equation}
			\notag		\frac{\operatorname{Vol}(E_{g,n}^{5L}(\mathbf{L}))}{V_{g,n}(\mathbf{L})}= O(g^{-1+o(1)}),
		\end{equation}
		and obtain the corollary.
	\end{proof}
	
	\section{\noindent{{\bf Proof of Theorem \ref{Thm1.1} and Theorem \ref{Thm1.11}}}}
	\label{S6}
	
	In this section, we absorb the above conclusions to prove Theorem \ref{Thm1.1}.
	Our approach utilizes Mirzakhani’s integration formula \cite{mirzakhani2007simple} to analyze the distribution of short geodesics. However, the possibility of intersection arises for geodesics with length greater than $2\operatorname{arcsinh}1$. Consequently, we restrict to the subset  $\mathcal{N}_g^L$ of Teichm\"uller space $\mathcal{M}_g$ as defined in Sect. \ref{S3} to prove Theorem \ref{Thm1.11}.

	\begin{proof}[Proof of Theorem \ref{Thm1.11}]
		First observe that for a hyperbolic surface $X \in \mathcal{N}_g^L$, any two of the short closed geodesics are disjoint, and the number of the primitive short closed geodesics is at most $3g - 3$.
		
		Let $S_k$ represent the volume of the subset of $\mathcal{N}_g^L$ comprising surfaces happen with precisely $k$ primitive short closed geodesics are non-separating. Given that there exists at most $g$ disjoint primitive non-separating closed geodesics, it follows that
		\begin{equation}
			\notag   	\operatorname{Vol}(\mathcal{N}_g^{< L})=\int_{\mathcal{M}_g}1_{\mathcal{N}_g^L}dX\geqslant\sum_{k=1}^{g} S_k.
		\end{equation}
		
		Let $G_k$ be the integral of the number of sets $S$ of $k$ disjoint unoriented short geodesics, and all such geodesics are not-separating. We can see that
		\begin{equation}
			\notag    	G_k=\sum_{r=k}^{g} \binom{r}{k}S_k=\sum_{r=1}^{g}\binom{r}{k}S_k.
		\end{equation}
		Here, $\binom{r}{k}=0$ when $r<k$. By the inclusion exclusion principle
		\begin{equation}
			\notag	\sum_{k=1}^{n}(-1)^{k+1}\binom{r}{k}=1-(-1)^n\binom{r-1}{n},
		\end{equation}
		it follows that
		\begin{align}
			\notag	\sum_{k=1}^{n}(-1)^{k+1}G_k&=\sum_{k=1}^{n}\sum_{r=1}^{g}(-1)^{k+1}\binom{r}{k}S_k\\
			\notag	&=\sum_{r=1}^{g}\sum_{k=1}^{n}(-1)^{k+1}\binom{r}{k}S_k\\
			\notag	&=\sum_{r=1}^{g}\left(1-(-1)^n\binom{r-1}{n}\right)S_k.
		\end{align}
		
		If we take $n=g$, then
		\begin{equation}
			\notag	\sum_{k=1}^{g}(-1)^{k+1}G_k=\sum_{r=1}^{g}S_k\leqslant\operatorname{Vol}(\mathcal{N}^L_g),
		\end{equation}
		since $\binom{r-1}{g}=0$ for all $r\in\{1,2,\dots,g\}$.
		
		In particular, if $n$ is an even number, then
		\begin{equation}\label{Eq6}
			\sum_{k=1}^{n}(-1)^{k+1}G_k=\sum_{r=1}^{g}\left(1-\binom{r-1}{n}\right)S_k\leqslant\sum_{k=1}^{g}S_k\leqslant\operatorname{Vol}(\mathcal{N}^L_g).
		\end{equation}
		
		For fixed positive constants $\frac{1}{2}<c_1<c_2$ and $g$ large enough, we can choose $n$ be an even number which satisfies $c_1\log g\leqslant n\leqslant c_2\log g$  and combine with (\ref{T2}) to calculate $G_k$ in $\mathcal{N}_g^L$, where
		\begin{align}
			\notag	G_k&=\int_{\mathcal{N}_g^L} 1_{\{x_i<L\}}dX\\
			\notag	&=\frac{1}{2^kk!}\int_{0}^{L}\dots\int_{0}^{L}x_1\dots x_k\operatorname{Vol}\left(\mathcal{N}_{g-k,2k}^L(x_1,x_1,\dots,x_k,x_k)\right)dx_1\dots dx_k.
		\end{align}
		Suppose $k\leqslant n=g^{o(1)}$ and $2\sum\limits_{i=1}^{k}x_i\leqslant nL(g)=g^{o(1)}$, then we deduce that
		\begin{align}
			\notag	\frac{\operatorname{Vol}\left(\mathcal{N}_{g-k,2k}^L(x_1,x_1,\dots,x_k,x_k)\right)}{V_{g-k,2k}(x_1,x_1,\dots,x_k,x_k)}&=1-O(g^{-1+o(1)})\\
			\notag	 \frac{V_{g-2k,2k}(x_1,x_1,\dots,x_k,x_k)}{V_{g-k,2k}}&=\left(1+O(g^{-1+o(1)})\right)\prod_{i=1}^{k}\left(\frac{\sinh\frac{x_i}{2}}{\frac{x_i}{2}}\right)^2\\
			\notag	\frac{V_{g-k,2k}}{V_g}&=1+O(g^{-1+o(1)})
		\end{align}
		from (\ref{T3}), \cite[Lemma 22]{nie2023large}, and \cite[Theorem 3.5]{mirzakhani2013growth}. Hence, it follows that
		\begin{align}
			\notag	G_k&=\frac{1-O(g^{-1+o(1)})}{2^kk!}\int_{0}^{L}\dots\int_{0}^{L}x_1\dots x_k\left(V_{g-k,2k}(x_1,x_1,\dots,x_k,x_k)\right)dx_1\dots dx_k\\
			\notag	&=\frac{1-O(g^{-1+o(1)})}{2^kk!}\int_{0}^{L}\dots\int_{0}^{L}x_1\dots x_k\prod_{i=1}^{k}\left(\frac{\sinh\frac{x_i}{2}}{\frac{x_i}{2}}\right)^2V_{g-k,2k}dx_1\dots dx_k\\
			\notag	&=\frac{I^k}{k!}V_g\left(1+O(g^{-1+o(1)})\right),
		\end{align}
		where
		\begin{equation}
			\notag	I=\frac{1}{2}\int_{0}^{L}x\left(\frac{\sinh\frac{x}{2}}{\frac{x}{2}}\right)^2dx.
		\end{equation}
		
		Thanks to Taylor's theorem, we can see that
		\begin{align}
			\label{E1}	\frac{1}{V_g}\sum\limits_{k=1}^{n}(-1)^{k+1}G_k &= \sum_{k=1}^{n}(-1)^{k+1}\frac{I^k}{k!}\left(1+O(g^{-1+o(1)})\right)\\
			\notag	&=(1-e^{-I}-e^{-\xi}\frac{I^{n+1}}{(n+1)!})-O(e^Ig^{-1+o(1)}) \\
			\notag	&= \left(1-e^{-I}-O\left(\frac{I^{n+1}}{(n+1)!}\right)\right)-O(e^Ig^{-1+o(1)}).
		\end{align}
		By Stirling's approximation
		\begin{equation}
			\notag	\frac{I^{n+1}}{(n+1)!}=\frac{I^{n+1}}{\sqrt{2\pi (n+1)}\left(\frac{n+1}{e}\right)^{n+1} e^{\theta}}\leqslant\frac{I^{n+1}}{\left(\frac{c_1\log g+1}{e}\right)^{n+1}},
		\end{equation}
		where $\theta=O(n^{-1})$. And for $I$
		\begin{align}
			\notag	 I&=\frac{1}{2}\int_{0}^{1}x\left(\frac{\sinh\frac{x}{2}}{\frac{x}{2}}\right)^2dx+\frac{1}{2}\int_{1}^{L}x\left(\frac{\sinh\frac{x}{2}}{\frac{x}{2}}\right)^2dx
			\notag	&=C+\frac{1}{2}\int_{1}^{L}x\left(\frac{\sinh\frac{x}{2}}{\frac{x}{2}}\right)^2dx
		\end{align}
		where $C=\frac{1}{2}\int_{0}^{1}x\left(\frac{\sinh\frac{x}{2}}{\frac{x}{2}}\right)^2dx$. In particular,
		
		\begin{equation}
			\notag	 \int_{1}^{L}\frac{e^x-2}{2x}dx\leqslant\frac{1}{2}\int_{1}^{L}x\left(\frac{\sinh\frac{x}{2}}{\frac{x}{2}}\right)^2dx\leqslant\int_{1}^{L}\frac{e^x}{2x}dx\leqslant \frac{1}{2}e^L.
		\end{equation}
		Now we denote
		\begin{equation}
			\notag T(L):=\int_1^L \frac{e^x}{2x}dx,
		\end{equation}
		then
		\begin{equation}
			\notag T(L)-\log L +C \leqslant I \leqslant T(L) +C \leqslant \frac{1}{2}e^L+C.
		\end{equation}
		
		Conbining with (\ref{E1}), we get
		\begin{equation}
			\label{Eq6.1}	\frac{1}{V_g}\sum\limits_{k=1}^{n}(-1)^{k+1}G_k =\left(1-e^{-T(L)+\log L -C}-O\left(\frac{eT(L)+Ce}{c_1\log g+1}\right)^{n+1}\right)+O\left(g^{-1+o(1)}e^{T(L)+C}\right).
		\end{equation}
		If $L\leqslant \log (2-\delta)\log g$ and $c_1>(2+C)e^2$, we get
		\begin{equation}
			\notag \left(\frac{eT(L)+Ce}{c_1\log g+1}\right)^{n+1}\leqslant\left(\frac{e^{L+1}+Ce}{c_1\log g+1}\right)^{n+1}\leqslant e^{-n-1}\leqslant g^{-c_1}.
		\end{equation}
		Then, we conclude that
		\begin{equation}
			\notag	\frac{1}{V_g}\sum\limits_{k=1}^{n}(-1)^{k+1}G_k = \left(1-O(Le^{-T(L)}-g^{-c_1})\right)+O\left(g^{-1+o(1)}e^{T(L)}\right).
		\end{equation}
		Notice that $T(L)$ is monotonically increasing for $L\geqslant 2$, thus the inverse function $T^{-1}$ exists and is also monotonically increasing, which yields
		\begin{equation}
			\notag	\frac{e^L-e}{2L}\leqslant T(L) \leqslant \frac{1}{2}e^L.
		\end{equation}
		
		In particular,
		\begin{equation}
			\notag T(L)\leqslant (1-\frac{\delta}{2})\log g,
		\end{equation}
		when $L(g)\leqslant \log \left((2-\delta)\log g\right)$. Therefore, we have
		\begin{equation}
			\notag	\frac{1}{V_g}\sum\limits_{k=1}^{n}(-1)^{k+1}G_k =\left(1-O(Le^{-\frac{e^L-e}{2L}}-g^{-c_1})\right)+O\left(g^{-\frac{\delta}{2}+o(1)}\right).
		\end{equation}
		If $\lim\limits_{g\rightarrow\infty}L(g)=\infty$, it follows that
		\begin{equation}
			\notag	\lim\limits_{g\rightarrow\infty}Le^{-\frac{e^L-e}{2L}}=0,\quad \lim\limits_{g\rightarrow\infty}g^{-\frac{\delta}{2}+o(1)}=0.
		\end{equation}
		Combined with \eqref{Eq6}, we complete the first conclusion that
		\begin{equation}
			\notag \lim\limits_{g\rightarrow\infty} \frac{\operatorname{Vol}(\mathcal{N}_g^L)}{V_g}=1.
		\end{equation}
		
		Furthermore, if we take $L_0(g)=T^{-1}(\frac{1}{2}\log g)$, it follows from (\ref{Eq6.1}) that
		\begin{equation}
			\notag	\frac{\operatorname{Vol}(\mathcal{N}_g^{L_0(g)})}{V_g}= 1-O(g^{-\frac{1}{2}+o(1)}),
		\end{equation}
		and thus we complete the proof.
	\end{proof}
	\begin{rmk}
		In fact, if we take $L(g)$ more precisely to satisfy that
		\begin{equation}
			\lim\limits_{g\rightarrow\infty}L(g)=\infty,\quad L(g)\leqslant T^{-1}\left((1-\delta)\log g\right),
		\end{equation}
		then we can also obtain
		\begin{equation}
			\notag \lim\limits_{g\rightarrow\infty} \frac{\operatorname{Vol}(\mathcal{N}_g^L)}{V_g}=1.
		\end{equation}
	\end{rmk}
	
	Now we use Theorem \ref{Thm1.11} to prove Theorem \ref{Thm1.1}.
	
	\begin{proof}[Proof of Theorem  \ref{Thm1.1}]
		
		If $L(g)\geqslant  (1+\varepsilon)\log\log g$, we can see that
		\begin{equation}
			\notag	T(L)\geqslant \frac{(\log g)^{1+\varepsilon}-e}{2(1+\varepsilon)\log\log g}>\frac{1}{2}\log g,
		\end{equation}
		when $g$ is large enough. Since $T(L)$ is monotonically increasing with respect to $L$, we can see that $L(g)\geqslant L_0(g)$. By the definition of $\mathcal{N}_g^L$, it follows that
		\begin{equation}
			\notag \mathcal{N}_g^{L_0}\subseteq \mathcal{M}_g^{<L}.
		\end{equation}
		
		Thus
		\begin{equation}
			\notag \operatorname{Prob}_{\mathrm{WP}}^g(X\in\mathcal{M}_g ; \ell_{sys}(X)< L(g))\geqslant \frac{\operatorname{Vol}(\mathcal{N}_g^{L_0})}{V_g}=1-O(g^{-\frac{1}{2}+o(1)}),
		\end{equation}
		and hence we finish the proof of the main theorem.
	\end{proof}
	
	\begin{rmk}
		If we take $L(g)=l$ be a constant and estimate $I=\frac{1}{2}\int_{0}^{l}x\left(\frac{\sinh\frac{x}{2}}{\frac{x}{2}}\right)^2dx$ more accurately as
		\begin{equation}
			\notag	 I=\frac{1}{2}\int_{0}^{l}x\left(\frac{\sinh\frac{x}{2}}{\frac{x}{2}}\right)^2dx=\int_{0}^{l}\frac{e^x+e^{-x}-2}{2x}dx\geqslant\int_{0}^{l}\frac{x}{2}dx=\frac{l^2}{4}.
		\end{equation}
		It follows from (\ref{E1}) that
		\begin{equation}
			\notag \lim\limits_{g\rightarrow\infty} \frac{\operatorname{Vol}(\mathcal{N}_g^l)}{V_g}\geqslant 1-e^{-\frac{l^2}{4}} .
		\end{equation}
		In particular, let $l=\varepsilon>0$ small enough, $\mathcal{N}_g^{\varepsilon}$ is precisely $\mathcal{M}_g^{<\varepsilon}$. In the meantime,
		\begin{equation}
			\notag		1-e^{-I}\approx 1-e^{-\frac{l^2}{4}}\approx \frac{\varepsilon^2}{4}
		\end{equation}
		is the volume of the subset consists of the surfaces with all closed geodesics whose length less than $\varepsilon$ are simple and non-separating. On the other hand, the volume of the subsets that the surfaces have a closed separating geodesic whose length less than $\varepsilon$ is approximate to 0 when $g \to \infty$ \cite{nie2023large}.
		Hence,
		\[
		\lim\limits_{g\rightarrow\infty}\frac{\operatorname{Vol}(\mathcal{M}_g^{<\varepsilon})}{V_g}\approx\frac{\varepsilon^2}{4},
		\]
		which conincides with the asymptotic behavior for the volume of $\mathcal{M}_g^{<\varepsilon}$
		obtained in \cite[Proposition 3.3]{lipnowski2024towards} and \cite[Theorem 4.1]{mirzakhani2013growth}.
	\end{rmk}
	
	\bibliography{main}
	\bibliographystyle{plain}{}
\end{document}